\documentclass[12pt]{amsart}
\usepackage{amssymb}
\usepackage{booktabs}
\usepackage[numbers,sort&compress]{natbib}
\usepackage{color}
\usepackage{paralist}
\usepackage{hyperref}
\newtheorem {theorem}{Theorem}[section]
\newtheorem {proposition}{Proposition}[section]
\newtheorem {corollary}{Corollary}[section]
\newtheorem {lemma}{Lemma}[section]
\newtheorem {definition}{Definition}[section]
\newtheorem {remark}{Remark}[section]
\newtheorem {assumption}{Assumption}[section]

\newcommand{\M}{{\rm \bf M}}
\newcommand{\R}{{\mathbb R}}
\newcommand{\N}{\mathbb{N}}
\newcommand{\ba}{{\boldsymbol \alpha}}
\newcommand{\bb}{{\boldsymbol \beta}}
\newcommand{\bx}{\textrm{\bf{x}}}
\newcommand{\bz}{\textrm{\bf{z}}}
\newcommand{\p}{\textrm{\bf{p}}}
\newcommand{\q}{\textrm{\bf{q}}}
\newcommand{\by}{\textrm{\bf{y}}}
\newcommand{\bze}{\boldsymbol 0}
\newcommand{\bfd}{\textrm{\bf{d}}}
\newcommand{\dd}{d_{\min}}

\title[A bounded degree SDSOS hierarchy for solving SPLD polynomial programs]
{A bounded degree SDSOS hierarchy for solving SPLD polynomial programs}

\author{Chengmiao Yang}
\address[Chengmiao Yang]{Academy for Advanced Interdisciplinary Studies, Northeast Normal University, Changchun 130024, Jilin Province, China}
\email{cmyang@nenu.edu.cn}

\author{Liguo Jiao}
\address[Liguo Jiao]{Academy for Advanced Interdisciplinary Studies, Northeast Normal University, Changchun 130024, Jilin Province, China}%; Shanghai Zhangjiang Institute of Mathematics, Shanghai 201203, China.}
\email{jiaolg356@nenu.edu.cn; hanchezi@163.com}

\author{Jae Hyoung Lee$^{\dag}$}
\address[Jae Hyoung Lee]{Department of Applied Mathematics, Pukyong National University, Busan 48513, Korea.}
\email{mc7558@pknu.ac.kr; mc7558@naver.com}

\thanks{$^{\dag}$Corresponding Author}

\keywords{Polynomial optimization, SPLD polynomial programs, Krivine--Stengle's positivstellensatz, Bounded degree SDSOS hierarchies}

\subjclass[2020]{Primary: 90C23; secondary: 90C22}

\date{\today}

\begin{document}

\begin{abstract}
A polynomial optimization problem is said to have an SPLD structure if the polynomials defining its objective and constraints admit decompositions into separable and nonseparable parts, with the maximum degree of the separable parts exceeding that of the nonseparable parts.
In this paper, we propose a bounded-degree SDSOS hierarchy for globally solving a class of such problems.
We establish the asymptotic convergence of this hierarchy over compact feasible sets and provide a sufficient condition for its finite convergence.
Numerical experiments compare the proposed hierarchy with the BSDSOS hierarchy (Chuong et al. in J. Global Optim., 75(4):885--919, 2019).
\end{abstract}

\maketitle

\section{Introduction}\label{sect:1}

Consider the following polynomial optimization problem (POP for short),
\begin{align}\label{POP}
f^*\ :=\ \min\limits_{\bx\in\R^n} \ \left\{ f_0(\bx) : f_i(\bx)\geq 0, \ i=1,\ldots,m \right\}, \tag{${\rm POP}$}
\end{align}
where $f_i\colon\R^n\to\R,$ $i=0,1,\ldots,m,$ are real multivariate polynomial functions.
POPs have many applications in operations research, control, engineering, economics and statistics~\cite{Aittomaki2009,Bai2008,Curmei2025,Lasserre2015,Luo2010,Singer2011}.
The well-known moment-SOS hierarchy \cite{Lasserre2001} (where SOS stands for {\em sum of squares}) is a globally solving method based on certain specific positivity certificates of real algebraic geometry \cite{Putinar1993}.
It then results in solving a hierarchy of semidefinite program (SDP for short) relaxations of the problem~\eqref{POP}, whose associated monotone sequence of optimal values provides lower bounds and converges to $f^*$.
Later on, the convergence is proved to be finite generically \cite{Nie2014}.
However, in view of the present status of SDP solvers, the moment-SOS hierarchy does not scale well and is therefore limited to problems of modest size.

In order to overcome the issue of scalability, several research directions have been developed.
For instance, sparsity or symmetry exploiting variants of the moment-SOS hierarchy are taken into account to solve substantially large-scale problems~\cite{GuoWang2025,Waki2006,Wang2021,Wang2022}.
Another possibility is to define hierarchies of relaxations of the problem~\eqref{POP} based on other alternative positivity certificates (e.g., Krivine--Stengle's positivity certificate \cite{Krivine1964,Stengle1974}).
Such certificates are in general weaker but their implementation is easier as it results in LP-relaxations (e.g., DSOS \cite{Ahmadi2019}), second-order cone relaxations (e.g., SDSOS \cite{Ahmadi2019}), or cheaper SDP-relaxations (e.g., BSOS \cite{Lasserre2017}).

\subsection{On types of bounded degree SOS hierarchies}
Among many others, we are interested in SDSOS~\cite{Ahmadi2019} and BSOS~\cite{Lasserre2017}.
Indeed, as an extension of BSOS, a bounded degree mixed SDP-SOCP hierarchy for solving the problem~\eqref{POP} has been proposed by Chuong et al.~\cite{Chuong2019}.
Here, we review some types of bounded degree SOS hierarchies for globally solving the problem~\eqref{POP}.
Such hierarchies are based on the well-known Krivine--Stengle's certificate of positivity in real algebraic geometry~\cite{Krivine1964,Stengle1974}.
\begin{enumerate}[\upshape (i)]
\item {\bf BSOS hierarchy~\cite{Lasserre2017}}. The BSOS hierarchy was initially proposed by Lasserre et al.~\cite{Lasserre2017} (see also \cite{Weisser2018} for its sparse version) to address the computational burden of the semidefinite relaxations and the lack of finite convergence of the LP-relaxations~\cite{Lasserre2002}.
The key idea is to add SOS constraints with fixed degree to the LP hierarchy, thereby predefining the size of the semidefinite matrix for each relaxation in the hierarchy.
The BSOS hierarchy approximates the optimum value by solving a series of SDP problems as follows,
\begin{equation*}
q_{d}^{k}=\sup _{\substack{\mu\in \mathbb{R}, \\ c_{\p, \q} \geq 0}}
\left\{\mu : f_{0}-\mu-\sum_{(\p, \q) \in \mathbb{N}_{d}^{2 m}} c_{\p, \q}\left(\prod_{i=1}^{m} f_{i}^{p_{i}}\left(1-f_{i}\right)^{q_{i}}\right) \in \Sigma[\bx]_{k}\right\},
\end{equation*}
where $\mathbb{N}_{d}^{2 m}=\left\{(\p, \q) \in \mathbb{N}_0^{2 m}:|\p|+|\q| \leq d\right\}$ and $\N_0$ denotes the sets of nonnegative integers. 
In contrast to the LP hierarchy, for convex problems defined with convex quadratic polynomials or SOS-convex polynomials of degree at most $2k$, finite convergence occurs at the first step in the BSOS hierarchy.
Furthermore, similar to the standard SOS hierarchy, a sufficient condition also exists for the finite convergence of this hierarchy.

\item {\bf Bounded degree mixed SDP-SOCP hierarchy~\cite{Chuong2019}}.
Recently, Chuong et al.~\cite{Chuong2019} developed a new bounded degree mixed SDP-SOCP hierarchy for globally solving the problem~\eqref{POP}.
By dividing the variables into two groups,  $X_{1}=\left(x_{1}, \ldots, x_{n_{1}}\right)$ and  $X_{2}=\left(x_{n_{1}+1}, \ldots, x_{n}\right),$ two corresponding types of relaxation constraints are defined:  $\sigma_{1} \in {\rm SOS}_{k}\left[X_{1}\right]$ and $\sigma_{2} \in {\rm SDSOS}_{k}\left[X_{2}\right],$ where ${\rm SOS}_{k}\left[X_{1}\right]$ and ${\rm SDSOS}_{k}\left[X_{2}\right]$ denote the set of all SOS polynomials in variable $X_{1}$ with degree at most $k$ and the set of all scaled diagonally dominant SOS polynomials in variable $X_{2}$ with degree at most $k,$ respectively.
The size of both types of constraints is fixed by a preset parameter $k$ and is independent of the relaxation level $d.$
The $d$-th relaxation is as follows,
\begin{equation*}
\sup _{\substack{\mu \in \mathbb{R}, c_{\mathbf{p}, \mathbf{q}} \geq 0, \\ \sigma_{1} \in \mathbf{SOS}_{k}\left[X_{1}\right], \sigma_{2} \in \mathbf{SDSOS}_{k}\left[X_{2}\right]}}\left\{\mu : f_{0}-\mu-\sum_{(\p, \q) \in \mathbb{N}_{d}^{2 m}} c_{\mathbf{p}, \mathbf{q}} \left(\prod_{i=1}^{m} f_{i}^{p_{i}}\left(1-f_{i}\right)^{q_{i}}\right)=\sigma_{1}+\sigma_{2}\right\}.
\end{equation*}
They also proved that, under certain technical assumptions, the optimal values of the primal and dual relaxations both converge to the global optimum of the original problem~\eqref{POP}.

\item {\bf BSOS-SPLD hierarchy~\cite{Jiao2026}}.
For a class of optimization problems with SPLD polynomials (see Definition~\ref{SPLD_poly}), Jiao et al.~\cite{Jiao2026} proposed a variant of the bounded degree SOS hierarchy (called BSOS-SPLD) for solving it.
It decomposes the nonnegative terms into a sum of low degree SOS polynomials and univariate SOS polynomials.
For fixed $\mathbf{r}:=\left(r_{1}, \ldots, r_{n}\right) \in \mathbb{N}^{n}$ and $k \in \mathbb{N},$ the $d$-th hierarchy is as follows,
\begin{equation*}
\sup _{\substack{\mu \in \mathbb{R}, c_{\mathbf{p}, \mathbf{q}} \geq 0, \\ \sigma \in \Sigma[\mathbf{x}]_{k}, \sigma_{j} \in \Sigma\left[x_{j}\right]_{r_{j}}}}\left\{\mu : f_{0}-\mu-\sum_{(\mathbf{p}, \mathbf{q}) \in \mathbb{N}_{d}^{2m}} c_{\mathbf{p}, \mathbf{q}} \left(\prod_{i=1}^{m} f_{i}^{p_{i}}\left(1-f_{i}\right)^{q_{i}}\right)=\sigma+\sum_{j=1}^{n} \sigma_{j}\right\} ,
\end{equation*}
where $\mathbb{N}_{d}^{2 m}=\left\{(\p, \q) \in \mathbb{N}^{2 m}:|\p|+|\q| \leq d\right\}.$
Compared to the standard BSOS hierarchy~\cite{Lasserre2017}, a notable feature of the BSOS-SPLD hierarchy is that the maximum size of the semidefinite constraints in its relaxations may be significantly smaller.
\end{enumerate}

\subsection{Motivation}
As we have mentioned before, very recently, Jiao et al.~\cite{Jiao2026} defined the so-called SPLD polynomial (see Definition~\ref{SPLD_poly}), which is a new class of polynomials extending the concept of SPQ (Separable Plus Quadratic) polynomials proposed by Ahmadi et al.~\cite{Ahmadi2022}.
Moreover, they proposed a bounded degree SOS hierarchy to solve the optimization problem with SPLD polynomials \cite{Jiao2026}.

The present work can be seen as a follow-up to~\cite{Jiao2026}.
Unlike~\cite{Jiao2026}, however, we do not require every polynomial in the problem to be an SPLD polynomial.
Instead, we consider the broader class of SPLD polynomial programs defined in Section~\ref{sect:3} and propose a bounded degree SDSOS hierarchy for globally solving them.
The idea is inspired by Ahmadi and Majumdar~\cite{Ahmadi2019} and Chuong et al.~\cite{Chuong2019}.

\subsection{Contributions}
We make the following contributions to global polynomial optimization.
\begin{itemize}
\item We propose a bounded degree SDSOS hierarchy, called the BSDSOS-SPLD hierarchy, for globally solving a class of SPLD polynomial programs.
\item We provide a sufficient condition for finite convergence and show how an optimal solution can be extracted.
\end{itemize}

The remainder of the paper is organized as follows.
Section~\ref{sect:2} presents some notation and preliminaries on polynomials.
The main results are given in Section~\ref{sect:3}.
Conclusions are given in Section~\ref{sect:conclusion}.

\section{Preliminaries}\label{sect:2}

We begin this section by fixing some notation and preliminaries.
The set of natural numbers is denoted by $\N$.
Let $\R^n$ denote the $n$-dimensional Euclidean space, and let $\bx:=(x_1,\ldots,x_n)$ denote an element of $\R^n$.
The nonnegative orthant of $\R^n$ is denoted by
$\R_{+}^n:=\left\{\bx \in \R^n : x_i \geq 0,\ i = 1,\ldots, n \right\}.$

Let $\mathcal{S}^n$ be the set of $n\times n$ symmetric matrices with the usual inner product $\langle A, B\rangle := {\rm trace}(AB)$ for $A,B\in \mathcal{S}^n.$
Let $X\in \mathcal{S}^n.$
We say that $X$ is {\em positive semidefinite}, denoted by $X\succeq 0,$ if $\bz^TX\bz\geq 0$ for all $\bz\in\R^n,$ and denote by $\mathcal{S}_+^n$ the set of $n\times n$ symmetric positive semidefinite matrices.
Moreover, $X$ is said to be {\em diagonally dominant} (``dd" in short) if $X_{ii}\geq \sum_{j\neq i}|X_{ij}|$ for all $i=1, \ldots, n,$ and the set of $n\times n$ symmetric diagonally dominant matrices is denoted by $\mathcal{S}_{{\rm dd}}^n$.
More generally, $X$ is said to be {\em scaled diagonally dominant} (``sdd" in short) if there exists a diagonal matrix $D$ with all positive diagonal elements such that $D^TXD$ is a diagonally dominant matrix, and the set of $n\times n$ symmetric scaled diagonally dominant matrices is denoted by $\mathcal{S}_{{\rm sdd}}^n$.
The relationship among the above-mentioned matrices is $\mathcal{S}_{{\rm dd}}^n \subseteq \mathcal{S}_{{\rm sdd}}^n \subseteq \mathcal{S}_+^n,$ and the inclusions may be strict (see, e.g., \cite{Ahmadi2019}).

The ring of all real polynomials in the variable $\bx$ is denoted by $\R[\bx].$
Moreover, the vector space of all real polynomials in the variable $\bx$ with degree at most $d$ is denoted by $\R[\bx]_d.$
The degree of a polynomial $f$ is denoted by $\deg (f).$
For a multi-index $\ba:=(\alpha_1,\ldots,\alpha_n)\in\N_0^n,$ let us denote $|\ba|:=\sum_{i=1}^n\alpha_i$ and $\N^n_d:= \{\ba \in \N_0^n \colon  |\ba|\leq d\}.$
The notation $\bx^\ba$ denotes the monomial $x_1^{\alpha_1}\cdots x_n^{\alpha_n}.$
The canonical basis of $\R[\bx]_d$ is denoted by
\begin{equation}\label{cano_basis}
\lceil \bx\rceil_d:=(\bx^\ba)_{\ba\in\N^n_{d}}=(1,x_1,\ldots,x_n,x_1^2,x_1x_2,\ldots,x_n^2,\ldots,x_1^d,\ldots,x_n^d)^T,
\end{equation}
which has dimension $s(n,d):=\binom{n+d}{d}.$

We say that a real polynomial $f$ is sum-of-squares (SOS for short)
if there exist real polynomials $p_j,$ $j = 1,\ldots,q,$ such that $f =\sum_{j=1}^{q}p_{j}^2.$
The set consisting of all SOS real polynomials in variable $\bx$ is denoted by $\Sigma[\bx]$, clearly it holds that $\Sigma[\bx]\subset \R[\bx].$
In addition, the set consisting of all SOS real polynomials with degree at most $2d$ is denoted by
$\Sigma[\bx]_{d}$, similarly one has $\Sigma[\bx]_{d}\subset\R[\bx]_{2d}.$
For $f\in\R[\bx]_d,$ we denote by $f_\ba$ $(\ba\in\N^n_d)$ the coefficients of $f.$

Given an $s(n, 2d)$-vector $\by:= (y_\ba)_{\ba\in\N^n_{2d}}$ with $y_{\bze}=1,$ let $\M_{d}(\by)$ be the moment matrix of dimension $s(n, d),$ with rows and columns labeled by \eqref{cano_basis} in the sense that
\begin{equation*}
\M_d(\by)(\ba,\bb)=y_{\ba+\bb}, \ \forall \ba,\bb\in\N^n_d.
\end{equation*}
Namely, $\M_d(\by)=\sum_{\ba\in\N^n_{2d}}y_\ba (\lceil \bx\rceil_d\lceil \bx\rceil_d^T)_\ba.$

\medskip
The following proposition shows that SOS decomposability can be checked by solving an SDP.
\begin{proposition}\label{proposition1}{\rm \cite{Lasserre2010,Lasserre2015book}}
A polynomial $f\in\R[\bx]_{2d}$ has an SOS decomposition if and only if there exists $Q\in \mathcal{S}^{s(n,d)}_+$ such that
\begin{equation*}
\langle (\lceil \bx\rceil_d\lceil \bx\rceil_d^T)_\ba ,Q\rangle=f_\ba, \ \forall \ba\in\N_{2d}^n.
\end{equation*}
\end{proposition}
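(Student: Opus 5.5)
Both directions come down to identifying the coefficients of a quadratic form in the monomial vector $\lceil \bx\rceil_d$. The identity underlying everything is this: for any $Q\in\mathcal{S}^{s(n,d)}$,
\begin{equation*}
\lceil \bx\rceil_d^T Q \lceil \bx\rceil_d=\langle \lceil \bx\rceil_d\lceil \bx\rceil_d^T, Q\rangle=\sum_{\ba\in\N^n_{2d}}\langle (\lceil \bx\rceil_d\lceil \bx\rceil_d^T)_\ba, Q\rangle\, \bx^\ba,
\end{equation*}
where $(\lceil \bx\rceil_d\lceil \bx\rceil_d^T)_\ba$ is the constant coefficient matrix of $\bx^\ba$ in the matrix polynomial $\lceil \bx\rceil_d\lceil \bx\rceil_d^T$, the same notation used for $\M_d(\by)$. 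Consequently, the linear equations in the statement hold exactly when $f=\lceil \bx\rceil_d^T Q\lceil \bx\rceil_d$ as polynomials, because the monomials $\bx^\ba$, $\ba\in\N^n_{2d}$, are linearly independent.

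For necessity, assume $f=\sum_{j=1}^q p_j^2$.

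\begin{itemize}
\item The first and main step is a degree bound: each $p_j$ has degree at most $d$. To see this, let $t=\max_j \deg p_j$ and suppose $t>d$. Let $h_j$ be the homogeneous component of $p_j$ of degree $t$. The degree-$2t$ part of $f$ is then $\sum_j h_j^2$.
\item This sum is a nonzero polynomial. Indeed, some $h_j\neq 0$, so $h_j^2$ is positive on a dense open set, and the sum is nonnegative everywhere. Hence $\deg f=2t>2d$, contradicting $f\in\R[\bx]_{2d}$.
\item With the degree bound in hand, write $p_j=c_j^T\lceil \bx\rceil_d$ with $c_j\in\R^{s(n,d)}$. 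Then $f=\lceil \bx\rceil_d^T\bigl(\sum_j c_jc_j^T\bigr)\lceil \bx\rceil_d$.
\item Set $Q:=\sum_j c_jc_j^T$. This matrix is positive semidefinite, and comparing coefficients via the identity above gives the stated equations.
\end{itemize}

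For sufficiency, take $Q\succeq 0$ satisfying the equations. By the identity, $f=\lceil \bx\rceil_d^TQ\lceil \bx\rceil_d$. Factor $Q=\sum_{j} c_jc_j^T$, using a spectral decomposition with the square roots of the eigenvalues absorbed into the eigenvectors, or a Cholesky factorization. Then $f=\sum_j (c_j^T\lceil \bx\rceil_d)^2$, which is an SOS decomposition.

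The only step that is not pure bookkeeping is the degree bound in the necessity direction. It rests on the fact that top-degree squares cannot cancel, since a sum of squares of real polynomials that are not all zero is not the zero polynomial.
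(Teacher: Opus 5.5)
Your argument is correct and is the standard Gram-matrix proof found in the references the paper cites (the paper itself states this proposition without proof). You bound $\deg p_j\leq d$ using the fact that the top-degree squares cannot cancel, read off $Q=\sum_j c_jc_j^T\succeq 0$, and for the converse factor $Q$ back into squares; the one nontrivial step, the degree bound, is handled properly.
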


Very recently, as a subclass of SOS polynomials, a new notion called scaled diagonally dominant sums-of-squares (SDSOS for short) polynomial was introduced by Ahmadi and Majumdar~\cite{Ahmadi2019}; see the following definition.
\begin{definition}[SDSOS polynomial]\label{SDSOS}{\rm
A polynomial $f\in\R[\bx]_{2d}$ is called {\it SDSOS} if it can be written as
\begin{equation*}
f(\bx) = \sum_{i=1}^p\alpha_im^2_i(\bx) + \sum_{i,j=1}^p(\beta^+_{ij}m_i(\bx) + \gamma^+_{ij}m_j(\bx))^2 +\sum_{i,j=1}^p(\beta^-_{ij}m_i(\bx)-\gamma^-_{ij}m_j(\bx))^2,
\end{equation*}
for some monomials $m_{i}(x),$ $m_{j}(x),$ $p\in\N$ with $1\leq p \leq s(n,d)$ and some scalars $\alpha_i,$ $\beta^\pm_{ij},$ $\gamma^\pm_{ij}$ with $\alpha_i\geq0.$
We denote the set of SDSOS polynomials of degree $2d$ in $\bx$ by $\widetilde\Sigma[\bx]_{d}.$
}\end{definition}

Note that a polynomial $f\in\R[\bx]_{2d}$ is SDSOS if and only if it admits a representation as $f(\bx) = \lceil \bx\rceil_d^TQ \lceil \bx\rceil_d$ for some $s(n,d)\times s(n,d)$ scaled diagonally dominant matrix $Q$; see, e.g., \cite[Theorem~3.6]{Ahmadi2019}.
In other words, checking a given real polynomial is SDSOS or not can be done by solving a second-order cone programming (SOCP for short) problem.
Observe that any SDSOS polynomial is SOS, but the converse is not true in general~\cite{Ahmadi2019}.

\medskip
We close this section by recalling the following concept proposed by Jiao et al. \cite{Jiao2026}.
\begin{definition}[SPLD polynomial]\label{SPLD_poly}{\rm
A nonconstant polynomial $f: \R^n \to \R$ is said to be {\it separable plus lower degree} (SPLD for short) if
\begin{align*}
f(\bx) = s(\bx) + l(\bx),
\end{align*}
where $s(\bx)$ is a separable polynomial, i.e., $s(\bx):= \sum_{i = 1}^ns_i(x_i)$ for some univariate polynomials $s_i: \R \to \R,$ and $l(\bx)$ is a polynomial with its degree lower than the degree of $s(\bx).$
}\end{definition}

\section{The BSDSOS-SPLD hierarchy}\label{sect:3}

We consider the following polynomial optimization problem:
\begin{align}\label{P}
\inf_{\bx\in\R^n}\left\{f_0(\bx) \colon 0\leq f_i(\bx)\leq 1,\ i=1,\ldots,m\right\}. \tag{${\rm P}$}
\end{align}
Here, each $f_i\in\R[\bx]$, $i=0,1,\ldots,m$, is a nonconstant polynomial admitting a decomposition
\begin{equation*}
f_i(\bx)=s_i(\bx)+l_i(\bx), \ i=0,1,\ldots,m,
\end{equation*}
where $s_i$ is separable and $l_i$ contains the nonseparable terms.
Set
\begin{equation*}
D_s:=\max_{0\leq i\leq m}\deg(s_i) \quad\text{and}\quad D_l:=\max_{0\leq i\leq m}\deg(l_i).
\end{equation*}
We say that problem~\eqref{P} has an SPLD structure if
\begin{equation*}
D_s>D_l.
\end{equation*}
Such a problem is called an SPLD polynomial program.

The feasible set of problem~\eqref{P} is denoted by
\begin{align}\label{feasible-k}
\mathbf{K}:=\left\{\bx\in\R^n \colon 0\leq f_i(\bx)\leq 1,\ i=1,\ldots,m\right\}.
\end{align}

In this paper, we make the following assumptions on the feasible set $\mathbf{K}.$
\begin{assumption}\label{assumption1}
We assume that \begin{itemize}
\item[{\rm (i)}] $\mathbf{K}$ is a nonempty compact set;
\item[{\rm (ii)}] $\{1,f_1,\ldots,f_m\}$ generates the ring $\R[\bx];$
\end{itemize}
\end{assumption}

We next note that any nonempty compact feasible set can be represented in the form~\eqref{feasible-k}, and that Assumption~\ref{assumption1}(ii) can be ensured without loss of
generality.
Indeed, let
\begin{equation*}
\mathbf{K}':=\left\{x\in\mathbb{R}^n:g_i(x)\geq 0,\ i=1,\ldots,m\right\}
\end{equation*}
be nonempty and compact. For each $i=1,\ldots,m$, choose $M_i>0$ such that
\begin{equation*}
M_i>\max_{x\in \mathbf{K}'}g_i(x),
\end{equation*}
and define
\begin{equation*}
f_i(x):=\frac{g_i(x)}{M_i}.
\end{equation*}
Then
\begin{equation*}
\mathbf{K}'=\left\{x\in\mathbb{R}^n:0\leq f_i(x)\leq 1,\ i=1,\ldots,m\right\}.
\end{equation*}
Moreover, if the set $\mathbf{K}$ is nonempty and compact, then Assumption~(ii) can be ensured by adding suitably normalized redundant affine coordinate functions.

Now, we recall a well-known result (due to Krivine \cite{Krivine1964} and Stengle \cite{Stengle1974}) on the representation of polynomials that are positive on $\mathbf{K}$.
\begin{lemma}[Krivine--Stengle's positivstellensatz \rm{\cite{Lasserre2010,Krivine1964,Stengle1974}}]\label{KS_positivity}
Let $f_i,$ $i =0, 1,\ldots, m$ be real polynomials and
$\mathbf{K} := \{\bx \in\R^n \colon 0\leq f_i(\bx) \leq 1, \ i = 1,\ldots, m\}.$
Suppose that {\rm Assumption~\ref{assumption1}} holds.
If $f_0 > 0$ on $\mathbf{K},$ then there exist finitely many nonnegative coefficients $c_{\p,\q}$ such that
\begin{align*}
f_0 - \sum_{\p,\q\in \N^m_0}c_{\p,\q} \prod_{i=1}^m  f_i^{p_i} (1-f_i)^{q_i}\equiv0.
\end{align*}
\end{lemma}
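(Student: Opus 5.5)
We would derive the Krivine--Stengle representation from the Archimedean structure of the preordering generated by the products $f_i$ and $1-f_i$. Let
\[
\mathcal{C}:=\Big\{\sum c_{\p,\q}\prod_{i=1}^m f_i^{p_i}(1-f_i)^{q_i} : c_{\p,\q}\ge 0 \text{ (finitely many nonzero)}\Big\},
\]
the convex cone generated by all such products. The plan is to show three things in order: first, that $\mathcal{C}$ is a semiring, closed under sums and products; second, that it is Archimedean, meaning that for every $g\in\R[\bx]$ there is $N$ with $N\pm g\in\mathcal{C}$; third, that any $f_0>0$ on $\mathbf{K}$ lies in $\mathcal{C}$, by a Kadison--Dubois type representation theorem.

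The semiring property is immediate, since the product of two generators is again a generator, with the exponents added. For the Archimedean property, let $H:=\{g: N\pm g\in\mathcal{C}\text{ for some }N\}$. We check that $H$ is a subring containing $\R$ and each $f_i$. Indeed $f_i\in\mathcal{C}$ and $1-f_i\in\mathcal{C}$, so $1\pm f_i\in\mathcal{C}$ with $N=1$, which gives $f_i\in H$. Closure of $H$ under addition is clear. For products, the standard identity
\[
N_1N_2 \pm gh = \tfrac12\big[(N_1\pm g)(N_2+h)+(N_1\mp g)(N_2-h)\big]
\]
expresses $N_1N_2\pm gh$ through products of elements of $\mathcal{C}$ with nonnegative weights, so $N_1N_2\pm gh\in\mathcal{C}$. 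By Assumption~\ref{assumption1}(ii), the elements $1,f_1,\ldots,f_m$ generate $\R[\bx]$ as a ring, so $H=\R[\bx]$.

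For the representation step we invoke the Kadison--Dubois (Krivine) theorem for Archimedean semirings. Suppose $f_0\notin\mathcal{C}$. A Zorn's lemma and separation argument, using the fact that $1$ is an algebraic interior point of $\mathcal{C}$ by the Archimedean property, produces a linear functional $L$ with $L\ge0$ on $\mathcal{C}$, $L(1)=1$ and $L(f_0)\le 0$. One then refines $L$ to an extreme functional, which is multiplicative, i.e.\ a ring homomorphism $\phi:\R[\bx]\to\R$ that is nonnegative on $\mathcal{C}$. Any such homomorphism is evaluation at the point $\bx^*:=(\phi(x_1),\ldots,\phi(x_n))$, and nonnegativity on $f_i$ and on $1-f_i$ forces $\bx^*\in\mathbf{K}$. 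Then $f_0(\bx^*)=\phi(f_0)\le 0$, contradicting $f_0>0$ on $\mathbf{K}$. Strict positivity is what makes this work, since it allows us to perturb: $f_0-\varepsilon\ge0$ on $\mathbf{K}$ for some $\varepsilon>0$, and the separation can then be made strict. Note that compactness of $\mathbf{K}$ (Assumption~\ref{assumption1}(i)) is used only implicitly, through the generation hypothesis.

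The main obstacle is the step that extreme positive functionals are multiplicative. For semirings, as opposed to quadratic modules, squares need not lie in $\mathcal{C}$, so the usual proof does not apply directly. We would follow Krivine's original argument via the Archimedean order unit. Alternatively, since the paper cites \cite{Lasserre2010}, we could simply appeal to the Krivine--Vasilescu form of the Positivstellensatz for this step.
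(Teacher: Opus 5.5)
The paper gives no proof of this lemma; it only cites it. So the relevant comparison is with the standard proof behind the cited sources, and your outline is essentially that proof: show that the cone $\mathcal{C}$ generated by the products $\prod_i f_i^{p_i}(1-f_i)^{q_i}$ is an Archimedean semiring, then apply the Kadison--Dubois (Krivine) representation theorem. Your Archimedean step is complete and correct. This includes the product identity and the use of Assumption~\ref{assumption1}(ii), read, as it must be, as generation of $\R[\bx]$ as an $\R$-algebra. You are also right that compactness of $\mathbf{K}$ is not an independent ingredient: since each $x_j$ is a polynomial in $f_1,\ldots,f_m$, it is bounded on $\mathbf{K}$.

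The representation step needs two corrections.

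\emph{Multiplicativity.} The step you call the main obstacle is in fact the easy one for a semiring, precisely because $\mathcal{C}\cdot\mathcal{C}\subseteq\mathcal{C}$. No squares are needed; it is for quadratic modules that this step is delicate. Let $L$ be an extreme state and let $a\in\mathcal{C}$ with $N-a\in\mathcal{C}$. Then $b\mapsto L(ab)$ and $b\mapsto L((N-a)b)$ are nonnegative on $\mathcal{C}$ and sum to $NL$.
\begin{itemize}
\item If $0<L(a)<N$, this writes $L$ as a convex combination of two states, so extremality gives $L(ab)=L(a)L(b)$.
\item If $L(a)=0$, then for $b\in\mathcal{C}$ with $M-b\in\mathcal{C}$ one has $0\le L(ab)\le M\,L(a)=0$. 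Since $\R[\bx]=\mathcal{C}-\mathcal{C}$, this extends to all $b$.
\item If $L(a)=N$, exchange $a$ and $N-a$ and use the previous case.
\end{itemize}
Linearity and $\R[\bx]=\mathcal{C}-\mathcal{C}$ then give multiplicativity on all of $\R[\bx]$. Your fallback of citing the Krivine--Vasilescu theorem for this step would be circular, since that theorem is the lemma itself.

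\emph{Passing to an extreme functional.} Refining $L$ to an extreme functional with $\phi(f_0)\le 0$ requires an argument. The Archimedean property gives $|L(g)|\le N_g$ for every state, so the state space is compact in the pointwise topology by Tychonoff. The face on which $L\mapsto L(f_0)$ attains its minimum (which is $\le 0$) then contains an extreme point of the state space by Krein--Milman.

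With these two points in place, you obtain $\phi(f_0)=f_0(\bx^*)\le 0$ for some $\bx^*\in\mathbf{K}$, which contradicts $f_0>0$ on $\mathbf{K}$ directly. The $\varepsilon$-perturbation you mention plays no role.
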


\subsection{A bounded degree SDSOS hierarchy for SPLD polynomial programs}

In this subsection, we introduce a bounded degree SDSOS (BSDSOS) hierarchy for globally solving problem~\eqref{P}.
For convenience, write
\begin{equation*}
s_i(\bx):=\sum_{j=1}^n u_i^j(x_j), \ i=0,1,\ldots,m.
\end{equation*}
We use the following degree parameters:
\begin{itemize}
\item[$\bullet$] For each $i=0,1,\ldots,m$ and $j=1,\ldots,n$, let $d_i^j$ be the smallest nonnegative integer such that
\begin{equation*}
2d_i^j\geq \deg(u_i^j).
\end{equation*}
\item[$\bullet$] For each $i=0,1,\ldots,m$, let $r_i$ be the smallest nonnegative integer such that
\begin{equation*}
2r_i\geq \deg(l_i).
\end{equation*}
\item[$\bullet$] Choose fixed nonnegative integers $d_1,\ldots,d_n$ and $r$ satisfying
\begin{equation*}
d_j\geq \max_{0\leq i\leq m}d_i^j, \ j=1,\ldots,n, \quad \text{and} \quad r\geq \max_{0\leq i\leq m}r_i.
\end{equation*}
\item[$\bullet$] For each relaxation order $k\geq 1$, let $\dd$ be the smallest nonnegative integer such that
\begin{equation}\label{dmax}
2\dd\geq\max\left\{\deg(f_0),2\max_{1\leq j\leq n}d_j,2r,k\max_{1\leq i\leq m}\deg(f_i)\right\}.
\end{equation}
\end{itemize}

Now, for a given fixed integer $k\geq1,$ we consider the following problem,
\begin{align}\label{Pk}
\inf\limits_{\bx\in\R^n} \left\{ f_0(\bx) \colon  h_{\p,\q} (\bx) \geq 0, \ \forall (\p,\q)\in N_k \right\}, \tag{${\rm P}_k$}
\end{align}
where $N_k:=\left\{(\p,\q)\in\N^m_0\times\N^m_0 : |\p|+|\q|\leq k\right\}$ and $h_{\p,\q}\colon\R^n\to\R$ is the polynomial defined by
\begin{align*}
h_{\p,\q}(\bx):=\prod_{i=1}^m{f}_i(\bx)^{p_i}(1-{f}_i(\bx))^{q_i}
\end{align*}
for all $\bx\in \R^n.$
Indeed, the problem \eqref{Pk} is equivalent to \eqref{P} in the sense that \eqref{Pk} is formulated by adding redundant constraints, which are identically nonnegative over the feasible set of the problem \eqref{P}; see \cite[Section 2.3]{Lasserre2017} in detail.
Moreover, the Lagrangian dual problem associated with \eqref{Pk} is given by
\begin{align}\label{Dk}
\sup\limits_{\substack{\mu\in\R,c_{\p,\q}\geq0}}\left\{\mu : f_0(\bx) - \sum_{(\p,\q)\in N_k}c_{\p,\q}h_{\p,\q}(\bx) - \mu \geq 0, \ \forall \bx\in\R^n\right\}.\tag{${\rm D}_k$}
\end{align}
By weak duality, ${\rm val}\eqref{Dk}\leq{\rm val}\eqref{Pk} ={\rm val}\eqref{P}$.

For fixed $\bfd:=(d_1,\ldots, d_n) \in \N^n$ and $r \in \N,$ we consider the following $k$-th order hierarchy of SDSOS relaxations for the problem~\eqref{P},
\begin{align}\label{SDkdr}
\sup\limits_{\substack{\mu\in\R,c_{\p,\q}\geq0, \sigma\in\widetilde\Sigma[\bx]_{r}, \sigma_j\in\Sigma[x_j]_{d_j} }}\left\{ \mu \colon f_0-\sum_{(\p,\q)\in N_k}c_{\p,\q}h_{\p,\q}-\mu=\sigma+\sum_{j=1}^n\sigma_j\right\}.\tag{${\rm SD}_k^{\bfd,r}$}
\end{align}
We refer to this hierarchy as {\bf BSDSOS-SPLD}.

\begin{remark}[Comparison with the BSDSOS hierarchy \cite{Chuong2019}]
{\rm
Since $D_s>D_l$, terms of degree greater than $D_l$ can occur only in the separable polynomials $s_i$.
However, the condition $D_s>D_l$ does not require every polynomial $f_i$ to be an SPLD polynomial in the sense of Definition~\ref{SPLD_poly}.

In the BSDSOS hierarchy, the degree bound for the multivariate SDSOS polynomial must be fixed at a value no smaller than the maximum degree appearing in the polynomial optimization problem, including the degrees of the high-degree separable terms.
By contrast, the BSDSOS-SPLD hierarchy represents the separable parts using univariate SOS polynomials.
Hence, it is enough to choose $r$ so that $2r$ is no smaller than the maximum degree of the nonseparable parts.
Consequently, the BSDSOS-SPLD hierarchy may require a substantially smaller SOCP representation for the multivariate SDSOS polynomial.

However, the products $h_{\p,\q}$ do not generally preserve the SPLD structure of the original polynomials.
Therefore, the condition $D_s>D_l$ motivates the proposed decomposition but does not by itself guarantee exactness at a given relaxation order.
The coefficients $c_{\p,\q}$ combine the products
$h_{\p,\q}$, and the resulting residual polynomial is represented
by the fixed-degree SDSOS and univariate SOS components.
}
\end{remark}

As noted after Definition~\ref{SDSOS}, if $\sigma(\bx) \in \widetilde \Sigma[\bx]_{r},$ then $\sigma(\bx) = \lceil \bx\rceil_r^TX \lceil \bx\rceil_r$ for some $X \in  \mathcal{S}^{{s(n,r)}}_{{\rm sdd}}.$
This, along with Proposition~\ref{proposition1}, tells us that the problem~\eqref{SDkdr} can be reformulated as follows,
\begin{align}\label{problem0}
\sup\limits_{\substack{\mu,c_{\p,\q}, X, X_j}} \bigg\{ \mu\in\R :\,& \bigg(f_0-\sum_{(\p,\q)\in N_k}c_{\p,\q}h_{\p,\q}-\mu\bigg)_\ba=\langle B_\ba, X\rangle+\sum_{j=1}^n\langle B_{\ba}^j, X_j\rangle, \ \forall \ba\in \N^n_{2\dd}, \\
&c_{\p,\q}\geq0, \ (\p,\q)\in N_k, \ X\in \mathcal{S}^{{s(n,r)}}_{{\rm sdd}}, \  X_j\in \mathcal{S}^{d_j+1}_+, \  j=1,\ldots,n\bigg\}, \notag
\end{align}
where $\dd$ is defined as in \eqref{dmax}, and the symmetric matrices $B_\ba$ and each $B_{\ba}^j$ are defined, respectively, by
\begin{equation*}
B_\ba:=\left(\lceil \bx\rceil_r\lceil \bx\rceil_r^T\right)_\ba \quad \textrm{ and } \quad B_{\ba}^j:=\left(\lceil x_j\rceil_{d_j}\lceil x_j\rceil_{d_j}^T\right)_{\ba}
\end{equation*}
for all $\ba\in \N^n_{2\dd}.$

\begin{remark}\label{remark1}
{\rm Note from \cite[Lemma~3.8]{Ahmadi2019} that, for a matrix $X\in \mathcal{S}^{{s(n,r)}}_{{\rm sdd}}$, it can be expressed as
$ X=\sum_{1\leq i<j\leq s(n,r)}M^{ij},$
where each $M^{ij}$ is an $s(n,r)\times s(n,r)$ matrix with zeros everywhere except for four entries $M^{ij}_{ii},$  $M^{ij}_{ij},$ $M^{ij}_{ji},$ $M^{ij}_{jj},$ which form a $2\times 2$ matrix
\begin{align*}
\left(
  \begin{array}{cc}
    M^{ij}_{ii} & M^{ij}_{ij} \\
    M^{ij}_{ji} & M^{ij}_{jj} \\
  \end{array}
\right)\in \mathcal{S}_+^2,
\end{align*}
and it can be equivalently expressed by using SOCP (see, e.g., \cite{Alizadeh2003,Lobo1998}) as follows,
\begin{align*}
M^{ij}_{ii}+M^{ij}_{jj} \geq 0 \quad \textrm{and} \quad
\left\|\left(
  \begin{array}{c}
    2M^{ij}_{ij} \\
    M^{ij}_{ii} - M^{ij}_{jj} \\
  \end{array}
\right)\right\|\leq M^{ij}_{ii}+M^{ij}_{jj}.
\end{align*}
}\end{remark}

Thanks to Remark~\ref{remark1}, we now reformulate the problem~\eqref{problem0} into the following SDP-SOCP problem,
\begin{align*}
\sup\limits_{\substack{\mu,c_{\p,\q}, X, X_j,M^{ij}}} \bigg\{ \mu\in\R  : \, & \bigg(f_0-\sum_{(\p,\q)\in N_k}c_{\p,\q}h_{\p,\q}-\mu\bigg)_\ba=\langle B_\ba, X\rangle+\sum_{j=1}^n\langle B_{\ba}^j, X_j\rangle, \ \forall \ba\in \N^n_{2\dd},\\
& X_j\in \mathcal{S}^{d_j+1}_+, \  j=1,\ldots,n,\\
& X=\sum_{1\leq i<j\leq s(n,r)}M^{ij},\ M^{ij}\in\mathcal{S}^{s(n,r)},\ 1\leq i<j\leq s(n,r), \\
& M^{ij}_{ab}=0, \ \forall (a,b)\notin\{(i,i),(i,j),(j,i),(j,j)\}, \ 1\leq i<j\leq s(n,r),\\
&\left\|\left(
  \begin{array}{c}
    2M^{ij}_{ij} \\
    M^{ij}_{ii} - M^{ij}_{jj} \\
  \end{array}
\right)\right\|\leq M^{ij}_{ii}+M^{ij}_{jj}, \ 1\leq i<j\leq s(n,r),\\
&c_{\p,\q}\geq0, \ (\p,\q) \in N_k   \ \bigg\}.
\end{align*}

Its Lagrangian dual problem is the following SDP-SOCP problem:
\begin{align}\label{LSDkdr}
\inf\limits_{\substack{\by\in\R^{s(n,2\dd)}}}  \quad & \sum_{\ba\in\N^n_{2\dd}}(f_0)_\ba y_\ba  \tag{$\widetilde{\rm SD}_k^{\bfd,r}$} \\
 {\rm s.t. } \qquad \ &  \sum_{\ba\in\N^n_{2\dd}}(h_{\p,\q})_\ba y_\ba\geq0, \  (\p,\q)\in N_k, \nonumber \\
&\sum_{\ba\in\N^n_{2\dd}}y_\ba (\lceil x_j\rceil_{d_j}\lceil x_j\rceil_{d_j}^T)_\ba\succeq0, \ j=1,\ldots,n, \nonumber \\
&\left\|\left(
  \begin{array}{c}
    2(\M_r(\by))_{ij} \\
    (\M_r(\by))_{ii} - (\M_r(\by))_{jj} \\
  \end{array}
\right)\right\|\leq (\M_r(\by))_{ii}+(\M_r(\by))_{jj},  \nonumber \\
&1\leq i<j\leq s(n,r), \  y_{\bze}=1.\nonumber
\end{align}

The following theorem shows that both the sequences of the optimal values associated with the problems \eqref{SDkdr} and \eqref{LSDkdr} converge asymptotically to the global minimum of the primal problem~\eqref{P}.
For convenience, we denote by ${\rm val}\eqref{P},$ ${\rm val}\eqref{SDkdr}$ and ${\rm val}\eqref{LSDkdr}$ the optimal values of the problems~\eqref{P}, \eqref{SDkdr} and \eqref{LSDkdr}, respectively.

\begin{theorem}\label{thm3}
Let $\bfd\in\N^n$ and $r\in\N$ be fixed.
If {\rm Assumption~\ref{assumption1}} holds$,$ then
\begin{align*}
\lim_{k\to\infty}{\rm val}\eqref{SDkdr}=\lim_{k\to\infty}{\rm val}\eqref{LSDkdr}={\rm val}\eqref{P}.
\end{align*}
\end{theorem}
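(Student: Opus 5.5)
The plan is to sandwich both relaxations between ${\rm val}\eqref{P}$ from above and a sequence converging to ${\rm val}\eqref{P}$ from below. The lower sequence comes from Lemma~\ref{KS_positivity}, applied with $\sigma=0$ and $\sigma_j=0$.

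\emph{Step 1 (upper bounds and monotonicity).} For any feasible point $(\mu,c_{\p,\q},\sigma,\sigma_j)$ of \eqref{SDkdr}, every $h_{\p,\q}\geq 0$ on $\mathbf{K}$ and $\sigma+\sum_j\sigma_j\geq 0$ on $\R^n$. Evaluating the identity at any $\bx\in\mathbf{K}$ gives $f_0(\bx)\geq\mu$, so ${\rm val}\eqref{SDkdr}\leq{\rm val}\eqref{P}$. The value ${\rm val}\eqref{P}$ is finite because $\mathbf{K}$ is nonempty and compact by Assumption~\ref{assumption1}(i).

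For the dual \eqref{LSDkdr}, take $\bar\bx$ a minimizer of $f_0$ on $\mathbf{K}$ and set $\by$ to be the moment vector of the Dirac measure at $\bar\bx$, that is, $y_\ba=\bar\bx^\ba$. Then:
\begin{itemize}
\item $\sum_\ba (h_{\p,\q})_\ba y_\ba=h_{\p,\q}(\bar\bx)\geq0$;
\item the univariate moment matrices are the rank-one matrices $\lceil\bar x_j\rceil_{d_j}\lceil\bar x_j\rceil_{d_j}^T\succeq0$;
\item $\M_r(\by)=\lceil\bar\bx\rceil_r\lceil\bar\bx\rceil_r^T$ is PSD, so each of its $2\times2$ principal submatrices is PSD, which is exactly the SOCP constraint;
\item $y_{\bze}=1$.
\end{itemize}
Hence $\by$ is feasible with objective $f_0(\bar\bx)$, and ${\rm val}\eqref{LSDkdr}\leq{\rm val}\eqref{P}$.

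Weak duality between the SDP-SOCP reformulation of \eqref{SDkdr} and \eqref{LSDkdr} gives ${\rm val}\eqref{SDkdr}\leq{\rm val}\eqref{LSDkdr}$. To check it, pair the coefficient identity with $\by$; the terms $\langle X,\M_r(\by)\rangle$ and $\langle X_j,\cdot\rangle$ are nonnegative because the SOCP constraints on $\M_r(\by)$ say its $2\times2$ principal minors are PSD, i.e.\ $\M_r(\by)$ lies in the dual cone of $\mathcal{S}_{\rm sdd}$. This dual-cone identification, via Remark~\ref{remark1}, is the one point that needs care. Also $N_k\subseteq N_{k+1}$, so ${\rm val}\eqref{SDkdr}$ is nondecreasing in $k$. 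The parameters $\bfd$ and $r$ stay fixed, and only $\dd$ grows, which just enlarges the index set of coefficient equations.

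\emph{Step 2 (lower bound via Krivine--Stengle).} Fix $\epsilon>0$. The polynomial $f_0-{\rm val}\eqref{P}+\epsilon$ is positive on $\mathbf{K}$, so Lemma~\ref{KS_positivity} gives finitely many $c_{\p,\q}\geq0$ with $f_0-{\rm val}\eqref{P}+\epsilon=\sum c_{\p,\q}h_{\p,\q}$. Let $k_\epsilon$ bound $|\p|+|\q|$ over the nonzero coefficients. Then for every $k\geq k_\epsilon$ the tuple $\mu={\rm val}\eqref{P}-\epsilon$, these $c_{\p,\q}$ (padded with zeros), $\sigma=0$ and $\sigma_j=0$ is feasible for \eqref{SDkdr}. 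This uses that $0\in\widetilde\Sigma[\bx]_r$ and $0\in\Sigma[x_j]_{d_j}$, and that $\dd$ from \eqref{dmax} is large enough for all degrees in the identity. Thus ${\rm val}\eqref{P}-\epsilon\leq{\rm val}\eqref{SDkdr}$ for all $k\geq k_\epsilon$.

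\emph{Step 3 (conclusion).} Combining the two steps, for $k\geq k_\epsilon$,
\[
{\rm val}\eqref{P}-\epsilon\leq{\rm val}\eqref{SDkdr}\leq{\rm val}\eqref{LSDkdr}\leq{\rm val}\eqref{P}.
\]
Letting $\epsilon\downarrow0$ gives both limits.

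Assuming the Krivine--Stengle lemma, the main obstacle is the weak-duality step. It requires verifying that the constraints of \eqref{LSDkdr} encode exactly the dual cones, namely the PSD cone for the univariate blocks and the cone of matrices with PSD $2\times2$ principal submatrices, dual to $\mathcal{S}_{\rm sdd}$. This identification is what justifies $\langle M^{ij},\M_r(\by)\rangle\geq0$ for each $2\times2$ block.
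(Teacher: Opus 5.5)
Your proposal is correct and follows essentially the same route as the paper. Weak duality gives ${\rm val}\eqref{SDkdr}\le{\rm val}\eqref{LSDkdr}$, the Dirac moment vector $y_\ba=\bx^\ba$ at a feasible point gives ${\rm val}\eqref{LSDkdr}\le{\rm val}\eqref{P}$, and the Krivine--Stengle certificate with $\sigma=0$ and $\sigma_j=0$ supplies the lower bound. The paper defers that last step to an earlier reference, although the same argument appears in its proof of Proposition~\ref{proposition2}; you write it out explicitly, and you correctly justify weak duality through the dual cone of $\mathcal{S}_{\rm sdd}$.
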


\begin{proof}
For all $k\in \N,$ the inequality {\rm val} \eqref{SDkdr} $\leq$ {\rm val} \eqref{LSDkdr} always holds by weak duality for conic programming \cite{Vandenberghe1995}, and the inequality $\lim_{k\to\infty}{\rm val}\eqref{SDkdr} \geq {\rm val}\eqref{P}$ can be proved by an argument similar to that in \cite[Theorem~3.1]{Jiao2026}.
In order to complete the proof, it suffices to show that ${\rm val}\eqref{LSDkdr}\leq{\rm val}\eqref{P}$ for all $k.$

Now, fix $k \in \N$.
Let $\bx$ be an arbitrary feasible solution of the problem \eqref{P}, and let $\by:=(y_\ba)=(\bx^\ba)\in\R^{s(n,2\dd)}.$
Then we have $y_{\bze}=1$ and
\begin{align*}
0\leq h_{\p,\q}(\bx)= \sum_{\ba\in\N^n_{2\dd}}(h_{\p,\q})_\ba \bx^\ba=\sum_{\ba\in\N^n_{2\dd}}(h_{\p,\q})_\ba y_\ba
\end{align*}
for all $(\p,\q)\in N_k.$
In addition, from the definition of the moment matrix we see that
\begin{equation*}
	\M_{r}(\by)=\sum_{\ba\in \N^n_{2\dd}}y_\ba (\lceil\bx\rceil_r\lceil\bx\rceil_r^T)_\ba=\lceil\bx\rceil_r\lceil\bx\rceil_r^T \succeq 0 ,
\end{equation*}
which implies that
\begin{align*}
\left\|\left(
  \begin{array}{c}
    2(\M_r(\by))_{ij} \\
    (\M_r(\by))_{ii} - (\M_r(\by))_{jj} \\
  \end{array}
\right)\right\|\leq (\M_r(\by))_{ii}+(\M_r(\by))_{jj}, \ 1\leq i<j\leq s(n,r).
\end{align*}
On the other hand,
\begin{align*}
\lceil x_j\rceil_{d_j}\lceil x_j\rceil_{d_j}^T=\sum_{\ba\in\N^n_{2\dd}}\bx^{\ba} \left(\lceil x_j\rceil_{d_j}\lceil x_j\rceil_{d_j}^T\right)_\ba=\sum_{\ba\in\N^n_{2\dd}}y_{\ba} \left(\lceil x_j\rceil_{d_j}\lceil x_j\rceil_{d_j}^T\right)_\ba\succeq0,\ j=1,\ldots,n.
\end{align*}
Therefore, $\by\in\R^{s(n,2\dd)}$ is a feasible solution of the problem  $\eqref{LSDkdr},$ and so,
\begin{align*}
f_{0}(\bx)= \sum_{\ba\in\N^n_{2\dd}}(f_{0})_\ba \bx^\ba=\sum_{\ba\in\N^n_{2\dd}}(f_{0})_\ba y_\ba\geq {\rm val}\eqref{LSDkdr}.
\end{align*}
Since $\bx$ is an arbitrary feasible solution of the problem~\eqref{P}, it holds that ${\rm val}\eqref{LSDkdr}\leq {\rm val}\eqref{P}$ for all $k$, and the desired result follows.
\end{proof}

\begin{remark}
{\rm
The proof of Theorem~\ref{thm3} does not rely on a strict SPLD degree separation in the polynomial data.
Indeed, the fixed-degree cone
\begin{equation}\label{cone}
\widetilde{\Sigma}[\bx]_r+\sum_{j=1}^n\Sigma[x_j]_{d_j}
\end{equation}
contains the zero polynomial.
Hence, the Krivine--Stengle certificate is included in the proposed hierarchy as the special case $\sigma=0$ and $\sigma_j=0$, $j=1,\ldots,n$.
Consequently, the asymptotic convergence result remains valid for general polynomial data satisfying Assumption~\ref{assumption1}.

The SPLD structure instead motivates the particular choice of the cone \eqref{cone}.
When the high-degree terms are separable and the nonseparable terms have lower degree, the univariate SOS components are designed to handle the former, while the degree of the multivariate SDSOS component can be chosen according to the lower-degree nonseparable part.
}\end{remark}

We next provide a sufficient condition that guarantees finite convergence of the dual BSDSOS-SPLD hierarchy defined by \eqref{LSDkdr} and allows the extraction of at least one global
minimizer.
The result may be proved by following a similar argument of \cite[Theorem 3.2]{Jiao2026} (see also \cite[Lemma 1]{Lasserre2017}).
Nevertheless, for the convenience of the reader, we provide a detailed proof to make the paper self-contained.

\begin{theorem}\label{thm4}
Let $\bfd\in\N^n$ and $r\in\N$ be fixed$,$ and let $\bar{s}\in \N$ and $\bar{l}\in\N$ be the smallest integers such that
\[
2\bar{s}\geq \max_{0\leq i\leq m}\left\{\deg (s_i)\right\} \quad {\textrm{and}} \quad 2\bar{l}\geq \max_{0\leq i\leq m}\left\{\deg (l_i)\right\},
\]
respectively.
Assume that $\overline \by \in\R^{s(n,2\dd)}$ is an optimal solution to the problem \eqref{LSDkdr}.
If
\begin{equation}\label{rank_con}
\max_{1\leq j\leq n}\left\{{\rm rank}\left(\sum_{\ba\in\N^n_{2\dd}}\overline y_\ba (\lceil x_j\rceil_{\bar{s}}\lceil x_j\rceil_{\bar{s}}^T)_\ba\right)\right\}={\rm rank}\left(\sum_{\ba\in\N^n_{2\dd}}\overline y_\ba (\lceil \bx\rceil_{\bar{l}}{\lceil \bx\rceil_{\bar{l}}}^T)_\ba\right)=1,
\end{equation}
then ${\rm val}\eqref{LSDkdr}={\rm val}\eqref{P}$ and $\overline \bx:=(\overline y_\ba)_{|\ba|=1}\in\R^n$ is an optimal solution to the problem \eqref{P}.
\end{theorem}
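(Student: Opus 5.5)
The plan is to show that the rank conditions \eqref{rank_con} force $\overline\by$ to agree with the moment vector of the point $\overline\bx$ on every coefficient that matters. Then $\overline\bx$ is feasible for \eqref{P} with $f_0(\overline\bx)=\sum_\ba (f_0)_\ba\overline y_\ba={\rm val}\eqref{LSDkdr}$, and combining this with ${\rm val}\eqref{LSDkdr}\leq{\rm val}\eqref{P}$ (established in the proof of Theorem~\ref{thm3}) gives both claims.

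First, from the rank-one conditions we extract moment identities. The matrix $\M_{\bar l}(\overline\by)=\sum_\ba\overline y_\ba(\lceil\bx\rceil_{\bar l}\lceil\bx\rceil_{\bar l}^T)_\ba$ has $(\bze,\bze)$ entry $\overline y_{\bze}=1$. I would argue that it is positive semidefinite. Its principal $2\times2$ minors are nonnegative by the SOCP constraints, provided $\bar l\le r$, which holds since $r\geq\max_i r_i=\bar l$; these constraints concern $\M_r(\overline\by)$, and $\M_{\bar l}$ is a principal submatrix of it. Combined with rank one and a positive top-left entry, this gives $\M_{\bar l}(\overline\by)=vv^T$ with $v_{\bze}=1$, hence $v=(\overline y_\ba)_{|\ba|\le\bar l}$. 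Comparing entries, $\overline y_{\ba+\bb}=\overline y_\ba\overline y_\bb$ for $|\ba|,|\bb|\le\bar l$. Inductively this yields $\overline y_\ba=\overline\bx^\ba$ for all $|\ba|\le 2\bar l$. Similarly, each univariate Hankel matrix $\sum_\ba\overline y_\ba(\lceil x_j\rceil_{\bar s}\lceil x_j\rceil_{\bar s}^T)_\ba$ has rank one, top-left entry $1$, and is PSD, because it is a principal submatrix of the PSD matrix in the constraints, using $d_j\geq\bar s$ for the relevant $j$. Hence $\overline y_{t e_j}=\overline x_j^{\,t}$ for $t\le 2\bar s$.

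Next, every $f_i=s_i+l_i$ has monomials that are either pure powers $x_j^t$ with $t\le 2\bar s$ or of total degree $\le 2\bar l$. Hence $\sum_\ba(f_i)_\ba\overline y_\ba=f_i(\overline\bx)$ for all $i=0,\dots,m$. In particular, for $(\p,\q)=(e_i,0)$ and $(0,e_i)$ (using $k\ge1$), the constraints of \eqref{LSDkdr} give $f_i(\overline\bx)\ge0$ and $1-f_i(\overline\bx)\geq0$. So $\overline\bx\in\mathbf K$, and $f_0(\overline\bx)={\rm val}\eqref{LSDkdr}\le{\rm val}\eqref{P}\le f_0(\overline\bx)$.

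The main obstacle is the PSD claims. The SOCP constraints only give $2\times2$ minor nonnegativity, and that is not full PSD. So I would instead use the rank-one structure directly. A symmetric rank-one matrix is $\lambda ww^T$, and its diagonal is nonnegative by the SOCP constraints (diagonal entries of $\M_r$ are bounded below as in Remark~\ref{remark1}). Nonnegative diagonal forces $\lambda\ge0$, and $\lambda\ne0$ because of the entry $1$, so the matrix is PSD. This handles the multivariate block. I also need $d_j\ge\bar s$ and $r\ge\bar l$, which I would check from the parameter choices $d_j\ge\max_i d_i^j$ and $r\ge\max_i r_i$. If some $d_j<\bar s$, the same rank-one sign argument can be applied directly to the Hankel matrix of size $\bar s$, using its diagonal entries $\overline y_{2te_j}$. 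These are nonnegative either from the PSD block when $t\le d_j$, or, for larger $t$, from the SOCP diagonal whenever $2t\le 2r$.
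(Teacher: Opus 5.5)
Your proposal is correct and follows the paper's proof. The rank-one conditions give $\overline y_\ba=\overline\bx^\ba$ on every monomial appearing in the $f_i$; the constraints indexed by $(\p,\q)=(e_i,\bze)$ and $(\bze,e_i)$ give feasibility of $\overline\bx$; and the bound ${\rm val}\eqref{LSDkdr}\leq{\rm val}\eqref{P}$ from the proof of Theorem~\ref{thm3} closes the sandwich.

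Your justification of the rank-one factorization is a detail the paper leaves implicit, and it can be shortened. Writing the matrix as $\lambda ww^T$, its $(\bze,\bze)$ entry $\lambda w_{\bze}^2=1$ already forces $\lambda>0$. So the diagonal-nonnegativity detour is unnecessary, and that detour is not actually justified when $t>\max\{d_j,r\}$.
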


\begin{proof}
Assume that the condition \eqref{rank_con} holds.
Then there exists $\overline \bx\in\R^n$ such that
\begin{align*}
&\sum_{\ba\in\N^n_{2\dd}}\overline y_\ba (\lceil x_j\rceil_{\bar{s}}\lceil x_j\rceil_{\bar{s}}^T)_\ba=\lceil \overline x_j\rceil_{\bar{s}}\lceil \overline x_j\rceil_{\bar{s}}^T, \ j=1,\ldots,n,\\
&\sum_{\ba\in\N^n_{2\dd}}\overline y_\ba (\lceil \bx\rceil_{\bar{l}}\lceil \bx\rceil_{\bar{l}}^T)_\ba=\lceil \overline \bx\rceil_{\bar{l}}\lceil {\overline \bx\rceil_{\bar{l}}}^T.
\end{align*}
These lead to the facts that $\overline y_\ba=(\overline \bx)^\ba$ for all $\ba\in\N^n_{2\bar{l}}$ and $\overline y_{\ba_j}=(\overline x_j)^{\alpha_j}$ for all $\alpha_j\in\N_{2\bar{s}}.$
Here, $\ba_j \in \N_{2\bar{s}}^n$ is a multi-index such that the $j$-th component is $\alpha_j$ and all other components are equal to $0.$

Since $\overline \by$ is a feasible solution of the problem \eqref{LSDkdr}, we have
\begin{equation}\label{rel2_thm2}
\sum_{\ba\in\N^n_{2\dd}}(h_{\p,\q})_\ba \overline y_\ba\geq0, \  (\p,\q)\in N_k.
\end{equation}
Note that, for each $i=0,1,\ldots,m,$ $f_i(\bx)=s_i(\bx)+l_i(\bx),$ $2\bar{s}\geq \deg(s_i),$ and $2\bar{l}\geq \deg (l_i).$
By the choices of $d_j$ and $r$, and by the definition of $d_{\min}$ in \eqref{dmax}, we have
\begin{equation*}
d_{\min}\geq \max_{1\leq j\leq n}d_j\geq\bar s \quad\text{and}\quad d_{\min}\geq r\geq\bar l.
\end{equation*}
Now, let $i\in\{1,\ldots,m\}$ be any fixed and choose $(\p, \q) \in N_{k}$ such that $p_j = 1$ if $j = i,$ otherwise $p_j = 0,$ and $\q = \bze.$
Then, from \eqref{rel2_thm2}, we have
\begin{align*}
0\leq \sum_{\ba\in\N^n_{2\dd}}(f_i)_\ba \overline y_\ba=\sum_{\ba\in\N^n_{2\bar{s}}}(s_i)_\ba \overline \bx^\ba+\sum_{\ba\in\N^n_{2\bar{l}}}(l_i)_\ba \overline \bx^\ba=s_i(\overline\bx)+l_i(\overline\bx)=f_i(\overline\bx).
\end{align*}
Similarly, if we choose $(\p, \q) \in N_{k}$ such that $\p = \bze$ and  $q_j=1$ if $j = i,$ otherwise $q_j = 0,$ then, from \eqref{rel2_thm2}, we have
\begin{align*}
0\leq 1-\sum_{\ba\in\N^n_{2\dd}}(f_i)_\ba \overline y_\ba=1-f_i(\overline\bx).
\end{align*}
Since $i\in\{1,\ldots,m\}$ is arbitrary, $\overline \bx$ is a feasible solution of the problem \eqref{P}.
Hence we have
\begin{align*}
{\rm val}\eqref{P}\geq {\rm val}\eqref{LSDkdr}=\sum_{\ba\in\N^n_{2\dd}}(f_0)_\ba \overline y_\ba=f_0(\overline \bx)\geq{\rm val}\eqref{P},
\end{align*}
where the first inequality follows from the proof of Theorem~\ref{thm3}, while the last inequality follows from the feasibility of $\overline\bx$ for problem \eqref{P}.
Thus $\overline \bx=(\overline y_\ba)_{|\ba|=1}\in\R^n$ is an optimal solution to the problem \eqref{P}.
\end{proof}

\begin{remark}
{\rm
The extraction argument in Theorem~\ref{thm4} relies only on the decomposition $f_i=s_i+l_i$ and on the rank-one conditions imposed up to the degrees of the two components.
Hence, the argument remains valid even when the separable and nonseparable components have the same degree.
Nevertheless, this degree separation may offer a computational advantage when it allows the order of the full multivariate moment matrix to remain low.
In such a case, the high-degree separable terms are handled through univariate moment matrices, while the full multivariate moment matrix only needs to cover the lower-degree nonseparable terms.
}\end{remark}

We next establish strong duality between the problems \eqref{SDkdr} and \eqref{LSDkdr} for all sufficiently large relaxation orders.
\begin{proposition}\label{proposition2}
Let $\bfd\in\N^n$ and $r\in\N$ be fixed.
Suppose that {\rm Assumption~\ref{assumption1}} holds and that there exists $\widehat\bx\in\R^n$ such that
\begin{equation}\label{prop2eqn1}
0<f_i(\widehat\bx)<1, \ i=1,\ldots,m.
\end{equation}
Then, there exists $k_0\in\N$ such that for every $k\geq k_0,$
\begin{equation}\label{prop2eqn2}
\max\eqref{SDkdr}=\inf\eqref{LSDkdr}.
\end{equation}
\end{proposition}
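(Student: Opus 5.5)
We plan to prove the proposition with standard conic duality: show that the dual SDP--SOCP problem \eqref{LSDkdr} is strictly feasible (it has a Slater point) and that its optimal value is finite. Then strong duality for conic programs gives zero gap, and the primal \eqref{SDkdr} attains its supremum. Finiteness is easy. Since $\mathbf{K}\neq\emptyset$, the proof of Theorem~\ref{thm3} shows that $\inf\eqref{LSDkdr}\le{\rm val}\eqref{P}<\infty$. For $k$ large, Theorem~\ref{thm3} gives ${\rm val}\eqref{SDkdr}>-\infty$: indeed ${\rm val}\eqref{SDkdr}\to{\rm val}\eqref{P}$, which is finite by compactness. Combined with weak duality, both values are then finite. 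So the choice of $k_0$ is made only to guarantee that \eqref{SDkdr} is feasible. Alternatively, we may fix $k_0\ge1$ and use Lemma~\ref{KS_positivity} applied to $f_0-\mu$ for $\mu$ below $\min_{\mathbf{K}} f_0$; this yields a feasible certificate with $\sigma=\sigma_j=0$ at some order, and feasibility persists for all larger $k$ because $N_k\subseteq N_{k+1}$.

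For the Slater point, we take $\by$ to be the moment vector of a probability measure $\nu$ with density, for instance the uniform measure on a small ball $B(\widehat\bx,\varepsilon)$. By \eqref{prop2eqn1} and continuity, we choose $\varepsilon$ so that $0<f_i<1$ on the ball for all $i$. Then $y_\bze=1$, and every linear constraint holds strictly:
\[
\sum_\ba (h_{\p,\q})_\ba y_\ba=\int h_{\p,\q}\,d\nu>0 ,
\]
since $h_{\p,\q}>0$ on the ball. Each univariate moment matrix $\int \lceil x_j\rceil_{d_j}\lceil x_j\rceil_{d_j}^T d\nu$ is positive definite, because no nonzero univariate polynomial vanishes on a set of positive measure. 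Likewise $\M_r(\by)=\int\lceil\bx\rceil_r\lceil\bx\rceil_r^Td\nu\succ0$. Every $2\times2$ principal submatrix of this positive definite matrix is positive definite, which is exactly strict satisfaction of the second-order cone constraints. So $\by$ lies in the interior of the feasible cone.

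The main obstacle is applying the correct form of conic strong duality. The dual is a mixed problem over the nonnegative orthant, PSD cones and second-order cones, and the free variable $\by$ enters affinely with the equality $y_\bze=1$. The statement we invoke is the standard one: if the minimization problem is strictly feasible and bounded below, then the optimal values coincide and the maximization (its conic dual) attains its optimum \cite{Vandenberghe1995}. The equality $y_\bze=1$ is affine, so Slater's condition only requires the conic constraints to be strict, which we verified above. We also need to check that \eqref{SDkdr}, in its SDP--SOCP form, is exactly the conic dual of \eqref{LSDkdr}. The sdd cone is the dual of the cone defined by the $2\times2$ SOC constraints on $\M_r(\by)$, by Remark~\ref{remark1}. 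With that identification, attainment holds in \eqref{SDkdr}, which justifies writing $\max$ in \eqref{prop2eqn2}.
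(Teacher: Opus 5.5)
Your proposal is correct and follows essentially the same route as the paper. The uniform measure on a small ball around $\widehat\bx$ gives a Slater point for \eqref{LSDkdr}: the linear inequalities hold strictly, the univariate and multivariate moment matrices are positive definite, and hence the SOC constraints hold strictly. Feasibility of \eqref{SDkdr} for large $k$ comes from a Krivine--Stengle certificate with $\sigma=\sigma_j=0$, and conic strong duality then gives zero gap and attainment. Your main route to feasibility, through the limit in Theorem~\ref{thm3}, is a harmless variant, since that limit rests on the same Krivine--Stengle argument the paper applies directly.
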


\begin{proof}
Since the polynomials $f_i$, $i=1,\ldots,m$, are continuous, condition~\eqref{prop2eqn1} implies that there exists $\epsilon>0$ such that  
\begin{equation}\label{prop2eqn3}
0<f_i(\bx)<1, \quad \bx\in B, \quad i=1,\ldots,m,
\end{equation}
where $B:=\left\{\bx\in\R^n \colon \|\bx-\widehat\bx\|\leq\epsilon\right\}.$

Let $\nu$ be the normalized Lebesgue measure on $B$, so that $\nu(B)=1$.
Fix $k\geq1$, and let $\dd$ be defined as in~\eqref{dmax}.
Define $\widehat\by\in\R^{s(n,2\dd)}$ by
\begin{equation}\label{prop2eqn4}
\widehat y_\ba:=\int_B\bx^\ba d\nu(\bx),\quad \ba\in\N^n_{2\dd}.
\end{equation}
Clearly, $\widehat y_{\bze}=1$.

For every $(\p,\q)\in N_k$, condition~\eqref{prop2eqn3} implies that
\begin{equation*}
h_{\p,\q}(\bx)=\prod_{i=1}^mf_i(\bx)^{p_i}\left(1-f_i(\bx)\right)^{q_i}>0
\end{equation*}
for all $\bx\in B$. 
Hence,
\begin{equation}\label{prop2eqn5}
\sum_{\ba\in\N^n_{2\dd}}(h_{\p,\q})_\ba\widehat y_\ba=\int_B h_{\p,\q}(\bx) d\nu(\bx)>0, \quad (\p,\q)\in N_k.
\end{equation}

Next, for each $j=1,\ldots,n$, define
\begin{equation*}
Y_j:=\sum_{\ba\in\N^n_{2\dd}}\widehat y_\ba\left(\lceil x_j\rceil_{d_j}\lceil x_j\rceil_{d_j}^T\right)_\ba.
\end{equation*}
By~\eqref{prop2eqn4},
\begin{equation*}
Y_j=\int_B\lceil x_j\rceil_{d_j}\lceil x_j\rceil_{d_j}^T d\nu(\bx).
\end{equation*}
Let $\bz\in\R^{d_j+1}\setminus\{\bze\}$. 
Then
\begin{equation*}
\bz^TY_j\bz=\int_B\left(\bz^T\lceil x_j\rceil_{d_j}\right)^2
d\nu(\bx).
\end{equation*}
The polynomial
\begin{equation*}
\bx\longmapsto \bz^T\lceil x_j\rceil_{d_j}
\end{equation*}
is nonzero. 
Since $B$ has nonempty interior, this polynomial cannot vanish identically on $B$. 
Hence, 
\begin{equation*}
\bz^TY_j\bz>0.
\end{equation*}
It follows that 
\begin{equation}\label{prop2eqn6}
Y_j\succ0, \ j=1,\ldots,n.
\end{equation}

Similarly, by the definition of the moment matrix and \eqref{prop2eqn4}, we have
\begin{equation*}
\M_r(\widehat\by)=\int_B\lceil\bx\rceil_r\lceil\bx\rceil_r^T d\nu(\bx).
\end{equation*}
For every nonzero $\bz\in\R^{s(n,r)}$, the polynomial
\begin{equation*}
\bx\longmapsto \bz^T\lceil\bx\rceil_r
\end{equation*}
is nonzero and cannot vanish identically on $B$.
Consequently,
\begin{equation}\label{prop2eqn7}
\M_r(\widehat\by)\succ0.
\end{equation}

We now verify that the second-order cone constraints in \eqref{LSDkdr} are satisfied strictly. 
Fix $1\leq i<j\leq s(n,r).$
It follows from~\eqref{prop2eqn7} that
\begin{equation*}
\begin{pmatrix}
(\M_r(\widehat\by))_{ii} & (\M_r(\widehat\by))_{ij}\\
(\M_r(\widehat\by))_{ij} & (\M_r(\widehat\by))_{jj}
\end{pmatrix}
\succ0.
\end{equation*}
In particular, $(\M_r(\widehat\by))_{ii}+(\M_r(\widehat\by))_{jj}>0$ and $(\M_r(\widehat\by))_{ii}(\M_r(\widehat\by))_{jj}-\left((\M_r(\widehat\by))_{ij}\right)^2>0$.
Moreover,
\begin{align*}
\,&\left((\M_r(\widehat\by))_{ii}+(\M_r(\widehat\by))_{jj}\right)^2-
\left\|\begin{pmatrix}
2(\M_r(\widehat\by))_{ij}\\
(\M_r(\widehat\by))_{ii}-(\M_r(\widehat\by))_{jj}
\end{pmatrix}
\right\|^2\\
=\,&\left((\M_r(\widehat\by))_{ii}+(\M_r(\widehat\by))_{jj}\right)^2-\left\{4\left((\M_r(\widehat\by))_{ij}\right)^2+\left((\M_r(\widehat\by))_{ii}-(\M_r(\widehat\by))_{jj}\right)^2\right\}\\
=\,&4\left\{(\M_r(\widehat\by))_{ii}(\M_r(\widehat\by))_{jj}-\left((\M_r(\widehat\by))_{ij}\right)^2\right\}>0.
\end{align*}
Therefore,
\begin{equation}\label{prop2eqn8}
\left\|\begin{pmatrix}
2(\M_r(\widehat\by))_{ij}\\
(\M_r(\widehat\by))_{ii}-(\M_r(\widehat\by))_{jj}
\end{pmatrix}\right\|
<(\M_r(\widehat\by))_{ii}+(\M_r(\widehat\by))_{jj}.
\end{equation}
Thus, \eqref{prop2eqn5}, \eqref{prop2eqn6}, and \eqref{prop2eqn8} yield that $\widehat\by$ is a strictly feasible solution of \eqref{LSDkdr}. 
Since $k\geq1$ was arbitrary, \eqref{LSDkdr} satisfies Slater's condition for every $k\geq1$.

It remains to show that \eqref{SDkdr} is feasible for all sufficiently large $k$. 
Let $f^*:={\rm val}\eqref{P}$.
Since $\mathbf{K}$ is nonempty and compact, $f^*$ is finite.
Fix $\eta>0$. 
Then $f_0-f^*+\eta>0$ on $\mathbf{K}$. 
Hence, by Lemma~\ref{KS_positivity}, there exist finitely many nonnegative coefficients $c_{\p,\q}$ such that
\begin{equation}\label{prop2eqn9}
f_0-f^*+\eta-\sum_{\p,\q\in\N^m_0}c_{\p,\q}h_{\p,\q}\equiv0.
\end{equation}
Since only finitely many coefficients in \eqref{prop2eqn9} are nonzero, there exists an integer $k_0\geq1$ such that $c_{\p,\q}=0$ whenever $|\p|+|\q|>k_0$.
Therefore, for every $k\geq k_0$,
\begin{equation}\label{prop2eqn10}
f_0-\sum_{(\p,\q)\in N_k}c_{\p,\q}h_{\p,\q}-(f^*-\eta)\equiv0.
\end{equation}
Therefore, for every $k\geq k_0$, \eqref{prop2eqn10} shows that \eqref{SDkdr} is feasible with
\begin{equation*}
\mu=f^*-\eta, \quad \sigma=0, \quad \sigma_j=0, \ j=1,\ldots,n,
\end{equation*}
and the coefficients $(c_{\p,\q})_{(\p,\q)\in N_k}$ given in \eqref{prop2eqn9}.
Thus, \eqref{SDkdr} is feasible for every $k\geq k_0$.

For every such $k$, the dual problem \eqref{LSDkdr} satisfies Slater's condition. 
Hence, by the strong duality theorem for conic programming, the supremum in \eqref{SDkdr} is attained and
\begin{equation*}
\max\eqref{SDkdr}=\inf\eqref{LSDkdr};
\end{equation*}
see, e.g., \cite{Vandenberghe1995}. 
This proves \eqref{prop2eqn2}.
\end{proof}

Combining Theorem~\ref{thm4} with Proposition~\ref{proposition2}, we obtain a sufficient condition for the finite convergence of both the primal and dual BSDSOS-SPLD hierarchies defined by
\eqref{SDkdr} and \eqref{LSDkdr}, respectively.
\begin{corollary}
Suppose that the assumptions of Proposition~\ref{proposition2} hold. 
Assume that, at some relaxation order $k_0\in\N$, the problem \eqref{LSDkdr} has an optimal solution $\overline\by\in\R^{s(n,2\dd)}$ satisfying the rank condition~\eqref{rank_con}. 
Then
\begin{equation*}
\overline\bx:=(\overline y_\ba)_{|\ba|=1}\in\R^n
\end{equation*}
is an optimal solution to the problem~\eqref{P}. 
Moreover, there exists $k_1\geq k_0$ such that, for every $k\geq k_1$,
\begin{equation*}
\max\eqref{SDkdr}=\min\eqref{LSDkdr}=\min\eqref{P}.
\end{equation*}
\end{corollary}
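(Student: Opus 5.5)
The plan is to combine Theorem~\ref{thm4}, Theorem~\ref{thm3} and Proposition~\ref{proposition2}, using monotonicity in $k$ of the primal values to propagate exactness from $k_0$ to all larger orders.

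\emph{Step 1 (optimality at $k_0$).} Apply Theorem~\ref{thm4} at order $k_0$. Since $\overline\by$ is optimal for \eqref{LSDkdr} with $k=k_0$ and satisfies \eqref{rank_con}, we get ${\rm val}(\widetilde{\rm SD}_{k_0}^{\bfd,r})={\rm val}\eqref{P}$. Moreover $\overline\bx$ is optimal for \eqref{P}, so the infimum in \eqref{P} is attained and ${\rm val}\eqref{P}=\min\eqref{P}=f_0(\overline\bx)$.

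\emph{Step 2 (dual values for $k\ge k_0$).} I will check that ${\rm val}\eqref{LSDkdr}$ is nondecreasing in $k$, so it is pinched at ${\rm val}\eqref{P}$ for all $k\geq k_0$.

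Monotonicity in $k$ is slightly delicate, because $\dd$ grows with $k$ and the variable $\by$ lives in a larger space. The remedy is to work on the primal side instead.

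\emph{Primal monotonicity.} The feasible set of \eqref{SDkdr} grows with $k$, since $N_k\subseteq N_{k+1}$ and one may set the extra $c_{\p,\q}=0$. Hence ${\rm val}\eqref{SDkdr}$ is nondecreasing in $k$.

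\emph{Strong duality and the choice of $k_1$.} Let $k_0'$ be the index from Proposition~\ref{proposition2}, and set $k_1:=\max\{k_0,k_0'\}$. For $k\geq k_1$, Proposition~\ref{proposition2} gives
\[
\max\eqref{SDkdr}=\inf\eqref{LSDkdr}\leq{\rm val}\eqref{P},
\]
where the last inequality comes from the proof of Theorem~\ref{thm3}.

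\emph{Lower bound (main obstacle).} We also need $\max\eqref{SDkdr}\geq{\rm val}\eqref{P}$, and I expect this to be the main obstacle. I would argue as follows. At order $k_0$, Step 1 gives ${\rm val}(\widetilde{\rm SD}_{k_0}^{\bfd,r})={\rm val}\eqref{P}$. If $k_0\geq k_0'$, Proposition~\ref{proposition2} applies at $k_0$ itself, giving ${\rm val}({\rm SD}_{k_0}^{\bfd,r})={\rm val}\eqref{P}$. Primal monotonicity then yields ${\rm val}\eqref{SDkdr}\geq{\rm val}\eqref{P}$ for all $k\geq k_0$.

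If instead $k_0<k_0'$, I would show that the dual values are also nondecreasing. The approach is to truncate an optimal $\by$ at order $k+1$ to a feasible point at order $k$. The truncation preserves:
\begin{itemize}
\item the constraints $h_{\p,\q}\geq0$, which are moments of degree at most $k\max\deg f_i$;
\item the univariate moment matrices;
\item the matrix $\M_r$, since all of these only involve indices in $\N^n_{2\dd(k)}$.
\end{itemize}
The objective also involves only degrees at most $\deg f_0\leq 2\dd(k)$. This gives ${\rm val}(\widetilde{\rm SD}_{k}^{\bfd,r})\leq{\rm val}(\widetilde{\rm SD}_{k+1}^{\bfd,r})$, hence $\inf\eqref{LSDkdr}={\rm val}\eqref{P}$ for all $k\geq k_0$.

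\emph{Conclusion.} Strong duality then gives $\max\eqref{SDkdr}={\rm val}\eqref{P}$ for $k\geq k_1$, with the supremum attained. It remains to show the dual infimum is attained, i.e., that it is a minimum. For this I would take the moment vector of the point mass at $\overline\bx$, namely $(\overline\bx^\ba)_\ba$. It is feasible for \eqref{LSDkdr}, as shown in the proof of Theorem~\ref{thm3}, and achieves the value $f_0(\overline\bx)={\rm val}\eqref{P}$. Hence
\[
\max\eqref{SDkdr}=\min\eqref{LSDkdr}=\min\eqref{P}\qquad\text{for all } k\geq k_1.
\]
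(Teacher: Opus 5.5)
Your proposal is correct and follows the paper's own route. You use Theorem~\ref{thm4} at $k_0$, then monotonicity of ${\rm val}\eqref{LSDkdr}$ in $k$ (bounded above by ${\rm val}\eqref{P}$) to keep equality for all $k\geq k_0$, then Proposition~\ref{proposition2} with $k_1=\max\{k_0,k_0'\}$. Beyond the paper, your truncation argument justifies the dual monotonicity that the paper only asserts, which makes your case split via primal monotonicity unnecessary; apply it to an arbitrary feasible point at order $k+1$, since optimal solutions need not exist there. Your point-mass argument at $\overline\bx$ also makes explicit the attainment of $\min\eqref{LSDkdr}$, which the paper leaves implicit.
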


\begin{proof}
By Theorem~\ref{thm4}, at relaxation order $k_0$, 
\begin{equation*}
{\rm val}\eqref{LSDkdr}={\rm val}\eqref{P},
\end{equation*}
and $\overline\bx$ is an optimal solution to the problem \eqref{P}. 
Since the sequence of optimal values of problem \eqref{LSDkdr} is nondecreasing and bounded above by ${\rm val}\eqref{P}$, this equality holds for every $k\geq k_0$.

By Proposition~\ref{proposition2}, there exists $\widehat k\in\N$ such that
\begin{equation*}
\max\eqref{SDkdr}=\inf\eqref{LSDkdr}
\end{equation*}
for every $k\geq\widehat k$. 
Letting $k_1:=\max\{k_0,\widehat k\}$ gives the desired result.
\end{proof}

\subsection{Numerical experiments}\label{sect:6}
Now, we present numerical experiments to illustrate the proposed BSDSOS-SPLD hierarchy.
All the numerical examples are conducted on a computer with a 2.20 GHz Intel(R) Core(TM) Ultra 7 258V CPU and 32.0 GB RAM, equipped with MATLAB R2021b.

We select and solve the following nonconvex polynomial optimization problems (see, e.g., \cite{Chuong2019,Jiao2026,Lasserre2017}):
\begin{align}\label{Pnd}
\min_{\bx\in\mathbb{R}^{N}} \quad & f_{N,D}(\bx)     \tag{${\rm P}_{N,D}$}\\
\mathrm{s.t.}\quad & 0\leq G_{N,D}^{j}(\bx)\leq 1, \ j=1,\ldots,5,\notag\\
& 0\leq x_i\leq 1, \ i=1,\ldots,N.\notag
\end{align}
Here, $N$ and $D$ are positive even integers.
The objective polynomial is
\begin{equation*}
f_{N,D}(\bx)=\sum_{i=1}^{N}(-1)^{i+1}x_i^{D}+x_1-x_2,
\end{equation*}
and the five constraint polynomials are defined by
\begin{align*}
G_{N,D}^{1}(\bx)&=\sum_{p=1}^{N/2}\left(2x_{2p-1}^{D}+3x_{2p}^{2}+2x_{2p-1}x_{2p}\right), & G_{N,D}^{2}(\bx)&=\sum_{p=1}^{N/2} \left( 3x_{2p-1}^{2}+2x_{2p}^{2}-4x_{2p-1}x_{2p} \right),\\
G_{N,D}^{3}(\bx)&=\sum_{p=1}^{N/2}\left(x_{2p-1}^{2}+6x_{2p}^{2}-4x_{2p-1}x_{2p}\right), &G_{N,D}^{4}(\bx)&=\sum_{p=1}^{N/2}\left(x_{2p-1}^{2}+4x_{2p}^{D}-3x_{2p-1}x_{2p}\right),\\
G_{N,D}^{5}(\bx)&=\sum_{p=1}^{N/2}\left(2x_{2p-1}^{2}+5x_{2p}^{2}+3x_{2p-1}x_{2p}\right).
\end{align*}

The results are presented in Table \ref{table1}. They provide the following information:
\begin{itemize}
\item The pair $(d,k)$ denotes the fixed SDSOS order $d$ and the relaxation order $k$ in the BSDSOS hierarchy.
Similarly, $(d_0,r,k)$ denotes the fixed univariate SOS order $d_0$, the fixed multivariate SDSOS order $r$, and the relaxation order $k$ in the BSDSOS-SPLD hierarchy.
\item The abbreviation $\verb"Opt"$ denotes the optimal value obtained by the BSDSOS hierarchy and BSDSOS-SPLD hierarchy at order $k.$
\item $\verb"Time"$ stands for the time in seconds including the time for variable declaration in YALMIP, SDP and SOCP formulations and solving, and verification of the optimality condition.
\item The abbreviations $\verb"ns"$ and $\verb"ss"$ refer to the number of second-order cone constraints in each hierarchy and the maximum size of univariate SDP constraints in BSDSOS-SPLD hierarchy, respectively.
\item The abbreviation $\verb"rnk"$ stands for the numerical rank of the moment matrix in BSDSOS hierarchy; and the abbreviation $\verb"max_rnk"$ stands for the maximum of the numerical ranks of the moment matrix $\M_r(\by)$ and all moment matrices $\M_{d_0}(\by^j)$ in BSDSOS-SPLD hierarchy\footnote{For each matrix $M$, let $\sigma_1(M)\geq \sigma_2(M)\geq \cdots \geq 0$ denote its singular values. We declare that $M$ has numerical rank one if $\sigma_2(M)/\sigma_1(M)\leq 10^{-4}$, or equivalently, if $\sigma_1(M)$ is at least $10^4$ times larger than $\sigma_2(M)$.}.
\end{itemize}

\begin{table}[t]
\caption{Comparison of BSDSOS and BSDSOS-SPLD for the test problems}
\label{table1}
\centering{\footnotesize
\begin{tabular}{cccccccccccc}
\toprule
\multicolumn{1}{l}{\textbf{Problem}} & \multicolumn{5}{l}{\textbf{BSDSOS} \cite{Chuong2019}}  & \multicolumn{5}{l}{\textbf{BSDSOS-SPLD}} \\
\cmidrule(rl){2-6} \cmidrule(rl){7-12}
                & $d,k$ & $\verb"Opt"$ & $\verb"Time"$ & $\verb"ns"$ & \verb"rnk" &$d_0,r,k$ & $\verb"Opt"$ &$\verb"Time"$ & $\verb"ns"$ & $\verb"ss"$ & \verb"max_rnk" \\
\toprule
$({\rm P}_{6,6})$       & $3,1$   & $-0.6963$  & $1.7$         & $3486$      & $27$       & $28,2,1$ & $-0.6274$    & $0.4$        & $378$       & $29$        & $4$         \\
                & $3,2$   & $-0.6676$  & $1.8$         & $3486$      & $45$       & $28,2,2$ & $-0.4161$    & $0.6$        & $378$       & $29$        & $4$         \\
                & $3,3$   & $-0.4129$  & $3.7$         & $3486$      & $1$        & $28,2,3$ & $-0.4129$    & $2.2$        & $378$       & $29$        & $1$         \\
\midrule
$({\rm P}_{8,6})$       & $3,1$   & $-0.6963$  & $6.7$        & $13530$     & $36$       & $28,2,1$ & $-0.6265$    & $0.7$        & $990$       & $29$        & $5$         \\
                & $3,2$   & $-0.6677$  & $7.1$        & $13530$     & $81$       & $28,2,2$ & $-0.4162$    & $0.9$        & $990$       & $29$        & $6$         \\
                & $3,3$   & $-0.4129$  & $11.9$        & $13530$     & $1$        & $28,2,3$ & $-0.4129$    & $5.2$       & $990$       & $29$        & $1$         \\
\midrule
$({\rm P}_{10,6})$& $3,1$   & $-0.6963$  & $22.7$        & $40755$     & $57$       & $28,2,1$ & $-0.6268$    & $1.5$        & $2145$      & $29$        & $6$         \\
                & $3,2$   & $-0.6677$  & $28.7$        & $40755$     & $141$      & $28,2,2$ & $-0.4163$    & $1.9$        & $2145$      & $29$        & $8$         \\
                & $3,3$   & $-0.4129$  & $36.2$        & $40755$     & $1$        & $28,2,3$ & $-0.4129$    & $13.6$       & $2145$      & $29$        & $1$         \\
\midrule
$({\rm P}_{12,6})$& $3,1$   & $-0.6963$  & $167.8$       & $103285$    & $85$       & $28,2,1$ & $-0.6290$    & $2.5$        & $4095$      & $29$        & $7$         \\
                & $3,2$   & $-0.6677$  & $170.7$       & $103285$    & $225$      & $28,2,2$ & $-0.4164$    & $3.5$        & $4095$      & $29$        & $10$         \\
                & $3,3$   & $-0.4129$  & $230.9$       & $103285$    & $1$        & $28,2,3$ & $-0.4129$    & $48.5$       & $4095$      & $29$        & $1$         \\
\midrule
$({\rm P}_{6,8})$ & $4,1$   & $-0.6963$  & $12.0$        & $21945$     & $46$       & $45,2,1$ & $-0.6084$    & $0.5$        & $378$       & $46$        & $4$         \\
                & $4,2$   & $-0.6677$  & $11.7$        & $21945$     & $82$       & $45,2,2$ & $-0.4131$    & $0.7$        & $378$       & $46$        & $4$         \\
                & $4,3$   & $-0.6597$  & $13.7$        & $21945$     & $45$       & $45,2,3$ & $-0.4090$    & $2.5$        & $378$       & $46$        & $1$         \\
\midrule
$({\rm P}_{8,8})$ & $4,1$   & $-0.6963$  & $239.3$       & $122265$    & $113$      & $45,2,1$ & $-0.6131$    & $0.8$        & $990$       & $46$        & $5$         \\
                & $4,2$   & $-0.6677$  & $234.4$       & $122265$    & $187$      & $45,2,2$ & $-0.4133$    & $1.2$        & $990$       & $46$        & $6$         \\
                & $4,3$   & $-0.6597$  & $164.6$       & $122265$    & $105$       & $45,2,3$ & $-0.4090$    & $8.1$       & $990$       & $46$        & $1$         \\
\midrule
$({\rm P}_{10,8})$& $4,1$   & $-0.6963$  & $3146.4$      & $500500$    & $222$      & $45,2,1$ & $-0.6132$    & $1.5$        & $2145$      & $46$        & $6$         \\
                & $4,2$   & $-0.6677$  & $2843.7$      & $500500$    & $332$      & $45,2,2$ & $-0.4133$    & $3.6$        & $2145$      & $46$        & $9$         \\
                & $4,3$   & $-0.6597$  & $4080.8$      & $500500$    & $221$      & $45,2,3$ & $-0.4090$    & $54.8$      & $2145$      & $46$        & $1$         \\
\midrule
$({\rm P}_{6,10})$& $5,1$   & $-0.6963$  & $125.6$       & $106491$    & $73$       & $66,2,1$ & $-0.6000$    & $0.7$        & $378$       & $67$        & $4$         \\
                & $5,2$   & $-0.6678$  & $163.5$       & $106491$    & $132$      & $66,2,2$ & $-0.4124$    & $1.0$        & $378$       & $67$        & $4$         \\
                & $5,3$   & $-0.6597$  & $209.9$       & $106491$    & $76$       & $66,2,3$ & $-0.4084$    & $2.9$        & $378$       & $67$        & $1$         \\
\midrule
$({\rm P}_{8,10})$& $5,1$   & $-0.6963$  & $7358.5$      & $827541$    & $152$      & $66,2,1$ & $-0.5990$    & $1.1$        & $990$       & $67$        & $5$         \\
                & $5,2$   & $-0.6678$  & $7186.8$      & $827541$    & $314$      & $66,2,2$ & $-0.4124$    & $2.1$        & $990$       & $67$        & $6$         \\
                & $5,3$   & $-0.6597$  & $7415.8$      & $827541$    & $199$      & $66,2,3$ & $-0.4084$    & $20.1$       & $990$       & $67$        & $1$         \\
\bottomrule\end{tabular}}
\end{table}

In these numerical experiments, we compare the standard BSDSOS hierarchy~\cite{Chuong2019} with the proposed BSDSOS-SPLD hierarchy on SPLD polynomial programs of varying dimensions and degrees.
For the test problem \eqref{Pnd}, we have $D_s=D$ and $D_l=2$, and hence $(P_{N,D})$ has an SPLD structure.
In each problem, the dimension $N\in\{6,8,10,12\}$ (number of variables) and polynomial degree $D\in\{6,8,10\}$ are even, and we solve the $k$th order relaxation for $k=1,2,3.$
All relaxations are solved in MATLAB via YALMIP \cite{Lofberg2004}, with MOSEK \cite{Mosek} as the conic solver.
The standard BSDSOS method constructs a single SDSOS certificate of degree $2d\geq D$ in all $N$ variables and thus requires a moment matrix of size $s(N,d)$.
By contrast, the BSDSOS-SPLD relaxation decomposes the certificate into a low degree multivariate part of degree $2r$ and univariate SOS polynomials of degree $2d_0.$

For convenience, the fixed SOS order $d_0$ is chosen so that $d_0+1$ is no larger than, or comparable to, the size $s(N,r)$ of the multivariate moment matrix.
This provides a simple rule for selecting $d_0$ without additional parameter tuning.
A smaller admissible value of $d_0$ may also be used and may still yield convergence at a low relaxation order.
Such a choice would reduce the sizes of the univariate semidefinite blocks.
In the present numerical experiments, however, we use the above convenient choice, although it may result in larger univariate moment matrices.

For the degree $6$ case ($D=6$), both hierarchies satisfy the rank-one condition at relaxation order $k=3,$ and thus converge to the same optimal value at that order.
In contrast, for $D\geq8$ the BSDSOS hierarchy does not satisfy the numerical rank-one condition within the tested relaxation orders $k\leq 3$, indicating that higher-order relaxations may be required to satisfy the numerical rank-one condition at such degrees, while the BSDSOS-SPLD attains $\verb"max_rnk" = 1$ at $k=3$ on all instances and typically produces lower bounds close to the final values by $k=2$, reflecting faster practical convergence.

In order to explain the conic complexity at a given level of the hierarchy, we first discuss the number of second-order cone (SOC) constraints induced by the order-$t$ moment matrix.
To this end, let $\M_t(\by)\in\mathcal{S}^{s(n,t)}$ be the moment matrix (indexed by all monomials in $n$ variables of total degree at most $t$).
For every pair $\{i,j\}$ with $1\leq i<j\leq s(n,t),$ the following SOC constraint is imposed on the entries of the moment matrix $\M_t(\by),$
\begin{equation}\label{rsocc}
\left\|\left(
  \begin{array}{c}
    2(\M_t(\by))_{ij} \\
    (\M_t(\by))_{ii} - (\M_t(\by))_{jj} \\
  \end{array}
\right)\right\|\leq (\M_t(\by))_{ii}+(\M_t(\by))_{jj},
\end{equation}
which is equivalent to
\begin{align*}
\left(
  \begin{array}{cc}
    (\M_t(\by))_{ii} & (\M_t(\by))_{ij} \\
    (\M_t(\by))_{ji} & (\M_t(\by))_{jj} \\
  \end{array}
\right)\succeq0.
\end{align*}
Hence \eqref{rsocc} enforces positive semidefiniteness for every principal $2\times2$ submatrix of $\M_t(\by)$.
Thus, the total number of SOC constraints is
\begin{equation*}
\binom{s(n,t)}{2}=\frac{s(n,t)(s(n,t)-1)}{2}.
\end{equation*}

\begin{remark}[{\bf A discussion on BSDSOS~\cite{Chuong2019} and BSDSOS-SPLD}]\label{rem2512}
{\rm
We now take the test problem \eqref{Pnd} as an example to compare the BSDSOS hierarchy~\cite{Chuong2019} with the proposed BSDSOS-SPLD hierarchy.
\begin{itemize}
\item The moment block in the BSDSOS hierarchy~\cite{Chuong2019} has size $s(N,d)$, and hence the number of SOC constraints is $\binom{s(N,d)}{2}$.
\item The BSDSOS-SPLD hierarchy uses the moment matrix $\M_r(\by)$ of degree $2r$, together with $N$ univariate SDP blocks for the high-degree separable terms.
The moment matrix has size $s(N,r)$ and therefore induces $\binom{s(N,r)}{2}$ SOC constraints, while the separable part is represented by $N$ univariate SDP blocks of size $d_0+1$.
\end{itemize}
Since $r<d$ in all the tested instances, the proposed hierarchy requires substantially fewer SOC constraints, as confirmed by the values of \texttt{ns} reported in Table~\ref{table1}.
It also exhibits shorter CPU times for all the tested instances.
}
\end{remark}

To examine Remark~\ref{rem2512} in more detail, we consider the cases $({\rm P}_{10,6}),$ $({\rm P}_{12,6})$ and $({\rm P}_{10,8}).$
For problem $({\rm P}_{10,6})$, the BSDSOS hierarchy with $d=3$ produces $\binom{286}{2}=40,755$ SOC constraints.
In the BSDSOS-SPLD hierarchy with $r=2$, $({\rm P}_{10,6})$ yields $\binom{s(10,2)}{2}=\binom{66}{2}=2,145$ SOC constraints, while the high-degree separable terms are represented by $N$ univariate
SDP blocks of size $d_0+1$.
Similarly, for $({\rm P}_{12,6})$ one obtains $s(12,3)=455$ and therefore $\binom{455}{2}=103,285$ SOC constraints under the BSDSOS hierarchy, but $s(12,2)=91$ and $\binom{91}{2}=4,095$ under the BSDSOS-SPLD hierarchy.
Under BSDSOS with $d=4$, the number of  SOC constraints for $({\rm P}_{10,8})$ equals $\binom{s(10,4)}{2}=\binom{1001}{2}=500,500$.
Under BSDSOS-SPLD with $r=2$, it equals $\binom{s(10,2)}{2}=\binom{66}{2}=2,145$, and the high–degree separable terms are handled by $N$ univariate SDP blocks of size $d_0+1=46.$
As the number of SOC constraints increases, interior-point methods require more memory and exhibit higher per-iteration factorization costs.
Consequently, the CPU times in Table~\ref{table1} increase, and the difference in CPU time between the two hierarchies grows as $D$ increases.
In particular, for $D\ge 8$ the BSDSOS hierarchy does not achieve the rank-one condition at $k=3.$
Since increasing the relaxation order expands the index set $N_k,$ i.e., $N_k\subset N_{k+1}$, the number of linear constraints
\begin{equation*}
\sum_{\ba\in\N^n_{2\dd}}(h_{\p,\q})_\ba y_\ba\geq0, \ (\p,\q)\in N_k
\end{equation*}
grows rapidly, which in turn increases the memory required to generate the constraints and the overall computational cost.
Nevertheless, the BSDSOS-SPLD hierarchy attains numerical rank one at $k=3$ for all tested instances.

Overall, as shown by the numerical experiments, the BSDSOS-SPLD hierarchy reduces conic complexity and CPU time compared with the BSDSOS hierarchy \cite{Chuong2019}.
Concretely, BSDSOS-SPLD uses a low-degree multivariate SDSOS polynomial (degree $2r$) together with $N$ univariate SOS polynomials for the separable high-degree terms (degree $2d_0$);
equivalently, the SDSOS term is represented by SOC constraints on all $2\times 2$ principal submatrices of the moment matrix $\M_r(\by),$ while the separable high-degree term appears as $N$ small univariate SDP blocks.
This hybrid SDP-SOCP design reduces the total number of conic constraints from hundreds of thousands to a few thousand, and enables higher degrees and dimensions to be solved within practical time and memory limits, whereas the BSDSOS hierarchy \cite{Chuong2019} requires much longer CPU time.

\section{Conclusions}\label{sect:conclusion}
In this paper, we have proposed a bounded degree SDSOS hierarchy for solving SPLD polynomial programs.
The asymptotic convergence of this hierarchy was proved and sufficient conditions for its finite convergence were provided.
Numerical experiments showed that our method outperformed the BSDSOS hierarchy proposed by Chuong et al.~\cite{Chuong2019}.

%\subsection*{Acknowledgments.}
%Liguo Jiao was partially supported by the Chinese National Natural Science Foundation under grant number 12471478.
%Jae Hyoung Lee was supported by the National Research Foundation of Korea (NRF) grant funded by the Korea government (MSIT) (NRF-2021R1C1C2004488).

\subsection*{Acknowledgments}
The authors would like to express their sincere gratitude to Professors J.-B. Lasserre and T.-S. Pham for helpful discussions during their visit to the Vietnam Institute for Advanced Study in Mathematics in October 2025. These discussions helped motivate and improve the present work.

%\subsection*{Fundings}
%Liguo Jiao was partially supported by the Chinese National Natural Science Foundation under grant number 12471478.
%Jae Hyoung Lee
%This work was supported by the National Research Foundation of Korea (NRF) grant funded by the Korea government (MSIT) (NRF-2021R1C1C2004488).

%\subsection*{Funding}
%Liguo Jiao was partially supported by the Chinese National Natural Science Foundation under grant numbers 12471478, 12371300, and was supported by the Education Department of Jilin Province (no. JJKH20241405KJ).
%Jae Hyoung Lee was supported by the National Research Foundation of Korea (NRF) grant funded by the Korea government (MSIT) (NRF-2021R1C1C2004488).

\subsection*{Availability of data and materials}
The codes and datasets generated in this study are available from the first author on a reasonable request.

\subsection*{Conflict of interest}
The authors declare that they have no conflict of interest.

\footnotesize
%\bibliography{BDH-4}

\end{document}